\documentclass{amsart}
\usepackage{verbatim}
\usepackage{amssymb}
\usepackage{amsmath}
\usepackage[usenames]{color}
\usepackage{graphicx}
\usepackage{amscd}

\theoremstyle{plain}
\newtheorem{theorem}{Theorem}

\newtheorem{proposition}[theorem]{Proposition}
\newtheorem{lemma}[theorem]{Lemma}
\newtheorem{corollary}[theorem]{Corollary}

\newtheorem{definition}[theorem]{Definition}

\theoremstyle{definition}

\newtheorem{remark}[theorem]{Remark}

\newtheorem{example}[theorem]{Example}
\numberwithin{equation}{section}
\numberwithin{theorem}{section}
\allowdisplaybreaks

\usepackage{hyperref}






\newcommand{\calL}{{\mathcal L}}

\newcommand{\one}{{{\bf 1}}}

\newcommand{\limn}{\lim_{n\to\infty}}

\newcommand{\ud}[0]{\,\mathrm{d}}

\begin{document}

\title[Brownian representations]
{Brownian representations of cylindrical continuous local martingales}

\author{Ivan Yaroslavtsev}
\address{Delft Institute of Applied Mathematics\\
Delft University of Technology \\ P.O. Box 5031\\ 2600 GA Delft\\The
Netherlands}
\email{I.S.Yaroslavtsev@tudelft.nl}

\begin{abstract}
In this paper we give necessary and sufficient conditions for a cylindrical continuous local martingale to be the stochastic integral with respect to a cylindrical Brownian motion. In particular we consider the class of cylindrical martingales with closed operator-generated covariations. We also prove that for every cylindrical continuous local martingale $M$ there exists a time change $\tau$ such that $M\circ \tau$ is Brownian representable.
\end{abstract}

\keywords{Brownian representation, cylindrical martingale, quadratic variation, UMD spaces}

\subjclass[2010]{60H05; Secondary 60G44, 47A56, 46E27.}

\maketitle


\section{Introduction}

In the fundamental work \cite{Doob53}, Doob showed that a real-valued continuous 
local martingale can be represented as a stochastic integral with respect to a 
real-valued Brownian motion if and only if this local martingale has an 
absolutely continuous quadratic variation. Starting from this point, for a 
Banach space $X$ the following problem appears: find necessary and 
sufficient conditions for an $X$-valued local martingale $M$ in order that there exist a 
(cylindrical) Brownian motion $W$ and a stochastically integrable function $g$ such that $M = 
\int_0^{\cdot}g\ud W$ (we then call $M$ {\it Brownian representable}.) For some special
instances of Banach spaces $X$, Brownian representation results are well known. The finite-dimensional 
version was derived in \cite{KS,SV} and a generalization to the Hilbert space case 
was obtained using different techniques in \cite{DPZ, LO,Ouv1}. But for a general 
Banach space $X$ some problems arise. For instance, one cannot define a quadratic 
variation of a Banach space-valued continuous martingale in a proper way (see 
for example we refer the reader to \cite{DGFR},
where a notion of quadratic variation is defined which, however, is not
well-defined for some particular martingales) and therefore it seems difficult to find appropriate 
necessary conditions for an arbitrary martingale to be Brownian representable. 
Nevertheless, quite general sufficient conditions (which are also
necessary when the Banach space is a Hilbert space) were obtained by 
Dettweiler in \cite{Dett}.

In order to generalize the above-mentioned results one can work with so-called cylindrical 
continuous local martingales (see \cite{JKFR,Ond1,Ond2,MP,Sch96,VY}), defined 
for an arbitrary Banach space $X$ as continuous linear mappings from 
$X^*$ to a linear space of continuous local martingales $\mathcal M^{\rm loc}$ 
equipped with the ucp topology. Using this approach together with 
functional calculus arguments, Ondrej\'at \cite{Ond1, Ond2} has shown that
if $X$ is a reflexive Banach space, then a cylindrical continuous local 
martingale $M$ is Brownian representable under appropriate conditions. More precisely, he shows that 
in $M$ is Brownian representable if there exist a 
Hilbert space $H$ and a scalarly progressively measurable 
process $g$ with values in $\mathcal L(H,X)$ such that
\begin{equation}\label{eq:1.1}
\begin{split}
  \int_0^t\|g\|^2\ud s &<\infty\;\;\; a.s.\; \forall t>0,\\
 [Mx^*]_t &= \int_0^t\|g^*x^*\|^2\ud s\;\;\; a.s.\; \forall t>0.
\end{split}
\end{equation}
The definition of a quadratic variation of a cylindrical continuous local 
martingale given in \cite{VY} was inspired by this result.

In this work we show that if $[Mx^*]$ is absolutely 
continuous for each $x^* \in X^*$, then there exist a Hilbert space $H$, an 
$H$-cylindrical Brownian motion $W_H$ defined on an enlarged probability space 
with an enlarged filtration, and a progressively scalarly measurable process 
$G$ with values in space of (possibly unbounded) linear operators 
from $X^*$ to $H$ such that $Mx^*=\int_0^{\cdot}Gx^*\ud 
W_H$ for all $x^*\in X^*$. Moreover, necessary and sufficient conditions for a so-called weak 
Brownian representation of an arbitrary $X$-valued continuous local martingale 
are established.

\vspace{.2cm}

It is well known (see \cite{Kal}) that each $\mathbb R$-, $\mathbb R^d$-, or 
$H$-valued continuous local martingale $M$ admits a time change $\tau$ 
such that $M\circ\tau$ is Brownian representable. This assertion is tied up to the fact 
that $[M\circ \tau] = [M]\circ \tau$, which makes the choice of $\tau$ evident. 
The same can be easily shown for a cylindrical continuous local martingale with 
a quadratic variation (see \cite{VY}). Here we prove that such a time change exists 
for arbitrary cylindrical continuous local martingales.

\vspace{.2cm}

To conclude this introduction we would like to point out the techniques
developed by Ondrej\'at in \cite{Ond1,Ond2}, in particular results developing a bounded Borel 
calculus for bounded operator-valued functions.
Quite a reasonable part of the 
current article is dedicated to applying and extending these techniques to a 
closed operator-valued $g$. Of course statements as \eqref{eq:1.1} do not make 
sense then, but thanks to closability of $g$ it is still possible to prove
some results on stochastic integrability for such $g$.

\section{Preliminaries}

We denote $[0,\infty)$ by $\mathbb R_+$. 

The {\it Lebesgue-Stieltjes measure} of a function 
$F:\mathbb R_+\to \mathbb R$ of bounded variation is the finite Borel measure 
$\mu_F$ on $\mathbb R_+$ defined by $\mu_F([a,b)) = F(b)-F(a)$ for $0\leq 
a<b<\infty$.

Let $(S,\Sigma)$ be a measurable space and let $(\Omega, \mathcal F, \mathbb P)$ 
be a probability space.
A mapping $\nu:\Sigma\times \Omega\to [0,\infty]$ will be called a {\em random 
measure} if for all $A\in \Sigma$, $\omega\mapsto \nu(A,\omega)$ is measurable 
and for almost all $\omega\in \Omega$, $\nu(\cdot, \omega)$ is a measure on 
$(S,\Sigma)$ and $(S, \Sigma, \nu(\cdot, \omega))$ is separable (i.e. 
the corresponding $L^2$-space is separable).

Random measures arise naturally when working with continuous local martingales. 
Indeed, for almost all $\omega \in \Omega$, the quadratic variation process 
$[M](\cdot, \omega)$ of a continuous local martingale $M$ is continuous and 
increasing (see \cite{Kal,MP,Prot}), so we can associate $\mu_{[M]}(\cdot, 
\omega)$ with it.

Let $(S,\Sigma,\mu)$ be a measure space. Let $X$ and $Y$ be Banach spaces. An 
operator-valued function $f: S \to \mathcal L(X,Y)$ is called {\em scalarly 
measurable} if for all $x\in X$ and $y^*\in Y^*$ the function $s\mapsto \langle 
y^*, f(s)x\rangle$ is measurable. If $Y$ is separable, by the Pettis measurability
theorem this is equivalent to the strong measurability of  
$s\mapsto f(s)x$ for each $x\in X$ (see \cite{HNVW1}).

Often we will use the notation $A \lesssim_Q B$ to indicate that there exists a 
constant $C$ which only depends on the parameter(s) $Q$ such that $A\leq C B$.

\section{Results}

\subsection{Cylindrical martingales and stochastic integration}
In this section we assume that $X$ is a separable Banach space with a dual space 
$X^*$.
 Let $(\Omega, \mathcal F, \mathbb P)$ be a complete probability space with 
filtration
$\mathbb F := (\mathcal F_t)_{t \in \mathbb R_+}$ that satisfies the usual 
conditions,
and let $\mathcal F := \sigma(\bigcup_{t\geq 0} \mathcal F_t)$.
We denote the predictable $\sigma$-algebra by $\mathcal P$.

A scalar-valued process $M$ is called a {\it continuous local martingale} if there 
exists a sequence of stopping times $(\tau_n)_{n\geq 1}$ such that 
$\tau_n\uparrow \infty$ almost surely as $n\to \infty$ and $\one_{\tau_n>0} 
M^{\tau_n}$ is a continuous martingale.

Let $\mathcal M^{\rm loc}$ be the class of all continuous local martingales.
On $\mathcal M^{\rm loc}$ define the translation invariant metric given by
\begin{equation*}\label{eq:metric}
 \|M\|_{\mathcal M^{\rm loc}} = \sum_{n=1}^{\infty} 2^{-n}\mathbb E[1 \wedge 
\sup_{t\in [0,n]} |M|_t].
\end{equation*}
One can easily check that the topology generated by this metric coincides with 
the ucp topology (uniform convergence on compact sets in probability). Moreover, 
$M_n\to 0$ in $\mathcal M^{\rm loc}$ if and only if for every $T\geq 0$, 
$[M_n]_T\to 0$ in probability (see \cite[Proposition 17.6]{Kal}).

Let $X$ be a separable Banach space, $\mathcal M^{\rm loc}(X)$ be the space of 
$X$-valued continuous local martingales. $\mathcal M^{\rm loc}(X)$ is complete 
under the ucp topology generated by the following metric
\begin{equation*}
 \|M\|_{\mathcal M^{\rm loc}(X)} = \sum_{n=1}^{\infty} 2^{-n}\mathbb E[1 \wedge 
\sup_{t\in [0,n]} \|M\|_t].
\end{equation*}
The completeness can be proven in the same way as in \cite[Part 3.1]{VY}. 

If $H$ is a Hilbert space and $M \in \mathcal M^{\rm loc}(H)$, then we define 
the quadratic variation $[M]$ as a compensator of $\|M\|^2$, and one can show 
that a.s.\
$$
[M] = \sum_{n=1}^{\infty}[\langle M, h_n\rangle],
$$
where $(h_n)_{n=1}^{\infty}$ is any orthonormal basis of $H$. For more details 
we refer to \cite[Chapter 14.3]{MP}.

\begin{remark}
 One can show that convergence in the ucp topology on $\mathcal M^{\rm loc}(H)$ 
is equivalent to convergence of quadratic variation in the ucp topology. This 
fact can be shown analogously to the scalar case, see \cite[Proposition 
17.6]{Kal}.
\end{remark}

Let $X$ be a Banach space. A continuous linear mapping $M:X^* \to \mathcal 
M^{\rm loc}$ is called a {\it cylindrical continuous local martingale}.
We will write $M \in \mathcal M_{\rm cyl}^{\rm loc}(X)$. Details on cylindrical 
martingales can be found in \cite{JKFR,VY}.
For a cylindrical continuous local martingale $M$ and a stopping time $\tau$ we 
define $M^{\tau}:X^* \to \mathcal M^{\rm loc}$ by $M^{\tau}x^*(t) = 
Mx^*(t\wedge\tau)$. In this way $M^{\tau}\in \mathcal M_{\rm cyl}^{\rm loc}(X)$ 
again.

Let $Y$ be a Banach space such that there exists a continuous embedding 
$j:Y^*\hookrightarrow X^*$ (e.g. $X$ is densely embedded in $Y$). Then define 
$M|_{Y}:Y^*\to \mathcal M^{\rm loc}$ by $y^*\mapsto M(jy^*)$. Obviously 
$M|_{Y}\in \mathcal M_{\rm cyl}^{\rm loc}(Y)$.

\begin{example}[Cylindrical Brownian motion]\label{wiencyl}
Let $X$ be a Banach space and $Q\in \mathcal L(X^*,X)$ be a positive 
self-adjoint operator, i.e. $\forall x^*,y^*\in X^*$ it holds that $\langle x^*, 
Qy^*\rangle = \langle y^*, Qx^*\rangle$ and $\langle Qx^*,x^*\rangle\geq 0$.
Let $W^Q:\mathbb R_+ \times X^* \to L^0(\Omega)$ be a {\it cylindrical $Q$-Brownian 
motion} (see \cite[Chapter 4.1]{DPZ}), i.e.
\begin{itemize}
 \item $W^Q(\cdot)x^*$ is a Brownian motion for all $x^* \in X$,
 \item $\mathbb E W^Q(t)x^*\,W^Q(s)y^* = \langle Qx^*,y^*\rangle \min\{t,s\}$
 $\forall x^*,y^* \in X^*$, $t,s \geq 0$.
\end{itemize}
The operator $Q$ is called the {\it covariance operator} of $W^Q$.
(See more in \cite[Chapter~1]{Nua} or \cite[Chapter 4.1]{DPZ} for a Hilbert 
space valued case
and \cite[Chapter~5]{NW1} or \cite{Rie} for the general case).
Then $W^Q\in \mathcal M_{\rm cyl}^{\rm loc}(X)$.

If $X$ is a Hilbert space and $Q = I$ is the identity operator, we call $W_X := 
W^I$ an {\it $X$-cylindrical Brownian motion}.
\end{example}

Let $X,Y$ be Banach spaces, $x^*\in X^*$, $y\in Y$. We denote by $x^*\otimes 
y\in \mathcal L(X,Y)$ a rank-one operator that maps $x\in X$ to $\langle 
x,x^*\rangle y$.

\begin{remark}\label{rem:adjofelem}
Notice that the adjoint of a rank-one operator is again a rank-one operator and 
$(x^*\otimes y)^* = y\otimes x^*:Y^*\to X^*$. Also for any Banach space $Z$ and 
bounded operator $A:Y\to Z$ we have that $A(x^*\otimes y) = x^*\otimes (Ay)$.
\end{remark}

The process $f: \mathbb R_+ \times \Omega \to \mathcal L(X,Y)$ is called {\it 
elementary progressive}
with respect to the filtration $\mathbb F = (\mathcal F_t)_{t \in \mathbb R_+}$ 
if it is of the form
$$
f(t,\omega) = \sum_{n=1}^N\sum_{m=1}^M \mathbf 1_{(t_{n-1},t_n]\times 
B_{mn}}(t,\omega)
\sum_{k=1}^Kx^*_{k}\otimes y_{kmn},
$$
where $0 \leq t_0 < \ldots < t_n <\infty$, for each $n = 1,\ldots, N$,
$B_{1n},\ldots,B_{Mn}\in \mathcal F_{t_{n-1}}$, $(x^*_k)_{k=1}^K\subset X^*$ and 
$(y_{kmn})_{k,m,n=1}^{K,M,N}\subset Y$.
For each elementary progressive
$f$ we define the stochastic integral with respect to $M\in \mathcal M_{\rm 
cyl}^{\rm loc}(X)$
as an element of $L^0(\Omega; C_b(\mathbb R_+;Y))$:
\begin{equation}\label{intnorm}
 \int_0^t f(s) \ud M(s) = \sum_{n=1}^N\sum_{m=1}^M \mathbf 
1_{B_{mn}}\sum_{k=1}^K
(M(t_n\wedge t)x^*_{k} - M(t_{n-1}\wedge t)x^*_{k})y_{kmn}.
\end{equation}
Often we will write $f \cdot M$ for the process $\int_0^\cdot f(s) dM(s)$.

\begin{remark}
 Notice that the integral \eqref{intnorm} defines the same stochastic process 
for a different form of finite-rank operator $\sum_{k=1}^Kx^*_{k}\otimes 
y_{kmn}$. Indeed, let $(A_{mn})_{mn}$ be a set of operators from $X$ to $Y$ such 
that $(A_{mn})_{mn} = (\sum_{k=1}^Kx^*_{k}\otimes y_{kmn})_{mn}$. Then $f\cdot 
M$ takes its values in a finite dimensional subspace of $Y$ depending only on 
$(A_{mn})_{mn}$. Let $Y_0 = \text{span}(\text{ran }(A_{mn}))_{mn}$.
For each fixed 
$y^*_0\in Y_0^*$, $m$ and $n$ one can define $\ell_{mn}\in X^*$, $x\mapsto 
\langle y_0^*,A_{mn}x \rangle$. In particular $\ell_{mn}(x) = 
\sum_{k=1}^K\langle x^*_{k}\langle y^*_0,y_{kmn}\rangle,x\rangle$. Then because 
of linearity of $M$
 \begin{align*}
 \langle y_0^*,\int_0^t f(s) \ud M(s)\rangle &= \sum_{n=1}^N\sum_{m=1}^M \mathbf 
1_{B_{mn}}\sum_{k=1}^K
(M(t_n\wedge t)x^*_{k} - M(t_{n-1}\wedge t)x^*_{k})\langle 
y^*_0,y_{kmn}\rangle\\
&= \sum_{n=1}^N\sum_{m=1}^M \mathbf 1_{B_{mn}}
(M(t_n\wedge t) - M(t_{n-1}\wedge t))\Bigl(\sum_{k=1}^K x^*_{k}\langle 
y^*_0,y_{kmn}\rangle\Bigr)\\
&= \sum_{n=1}^N\sum_{m=1}^M \mathbf 1_{B_{mn}}
(M(t_n\wedge t) - M(t_{n-1}\wedge t))\ell_{mn},
\end{align*}
where the last expression does not depend on the form of $(A_{mn})_{mn}$. Then 
since $Y_0$ is finite dimensional, the entire integral does not depend on the 
form of $(A_{mn})_{mn}$.
\end{remark}

\vspace{.2cm}

We say that
$M \in \mathcal M_{\rm cyl}^{\rm loc}(X)$ is {\it Brownian representable} if 
there exist a Hilbert space $H$, an $H$-cylindrical Wiener process $W_H$ on an 
enlarged probability space
$(\overline{\Omega},\overline{ \mathbb F}, \overline{\mathbb P})$ and
$g:\mathbb R_+\times\overline{\Omega}\times X^*\to H$ such that $g(x^*)$ is 
$\overline{\mathbb F}$-scalarly
progressively measurable and a.s.\
\begin{equation}\label{eq:brrepr}
 Mx^* = \int_0^{\cdot} g^*(x^*) dW_H, \;\;\; x^* \in X^*.
\end{equation}

We call that $M \in \mathcal M_{\rm cyl}^{\rm loc}(X)$
is {\it with an absolutely continuous covariation} (or $M \in \mathcal M_{\rm 
a.c.c.}^{\rm cyl}(X)$) if for each $x^*,y^* \in X^*$ the covariation 
$[Mx^*,My^*]$ is absolutely continuous a.s.\ Note that by 
\cite[Proposition~17.9]{Kal} this is equivalent to $[Mx^*]$ having an 
absolutely continuous version for all $x^*\in X^*$.

\subsection{Closed operator representation}
In view of the results of \cite[Theorem~2]{Ond1}, the natural problem presents itself to 
extend this theorem to the case of an unbounded operator-valued function $g$. 
Such a generalization will be proven in the present subsection.

\vspace{.05cm}

 For Banach spaces $X,Y$ define $\mathcal L_{cl}(X,Y)$ as a set of all closed
densely defined operators from $X$ to $Y$, $\mathcal L_{cl}(X):=\mathcal 
L_{cl}(X,X)$.

\vspace{.05cm}

Let $M\in \mathcal M_{\rm cyl}^{\rm loc}(X)$ be Brownian representable. We say 
that $M$ is {\it closed operator-Brownian representable} (or simply $M\in 
\mathcal M_{\rm cl}^{\rm cyl}(X)$) if there exists $G:\mathbb R_+ \times \Omega 
\to \mathcal L_{cl}(H,X)$ such that for each fixed $x^* \in X^*$ $G^*x^*$ is 
defined $\mathbb P\times \ud s$-a.s.\ and it is a version of corresponding 
$g^*(x^*)$ from \eqref{eq:brrepr}.

\begin{definition}
  Let $X$ be a Banach space with separable dual, $M\in \mathcal M_{\rm cyl}^{\rm 
loc}(X)$. Then $M$ has {\it a closed operator-generated covariation} if there 
exist a separable Hilbert space $H$ and a closed operator-valued function 
$G:\mathbb R_+\times\Omega \to \mathcal L_{cl}(H,X)$ such that for all $x^*\in 
X^*$, $G^*x^*$ is defined $\mathbb P\times\ud s$-a.s.\ and progressively 
measurable, and for all $x^*,y^*\in X^*$ a.s.\
 \begin{equation}\label{theorembrownianrepresenrationeq1}
  [ Mx^*,My^*]_t = \int_0^t \langle G^*x^*, G^*y^*\rangle \ud s,\;\;\; t\geq 0.
 \end{equation}
\end{definition}

Notice that the last assumption is equivalent to the fact that 
$[Mx^*]=\int_0^{\cdot}\|G^*x^*\|^2\ud s$ a.s.\ for all $x^* \in X^*$.

\begin{proposition}\label{stochintabscont}
 Let $H$ be a separable Hilbert space, $M\in \mathcal M_{\rm cyl}^{\rm loc}(H)$ 
has a closed operator-generated covariation.
 Let $G:\mathbb R_+\times\Omega \to \mathcal L_{cl}(H)$ be the corresponding 
covariation family.
 Then for each scalarly progressively measurable $f :\mathbb R_+\times\Omega \to 
H$ such that
 $G^*f$ is defined $\mathbb P \times \ud s$-a.e.\ and
 $\|G^*f\|\in L^2_{\rm loc}(\mathbb R_+)$ a.s.\ one can define the stochastic 
integral $\int_0^{\cdot}fdM$,
 and a.s.
 \begin{equation}\label{quadvarint}
 [\int_0^{\cdot}fdM]_{T} = \int_0^T\langle G^*(s)f(s),G^*(s)f(s)\rangle 
ds,\;\;\;T>0.
\end{equation}
\end{proposition}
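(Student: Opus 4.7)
The plan is to construct $\int_0^\cdot f\, dM$ by density, starting from elementary integrands and controlling convergence in the weighted norm $f \mapsto (\E \int_0^T \|G^* f\|^2\,ds)^{1/2}$ suggested by \eqref{quadvarint}. For elementary $f$ — which by the scheme in the excerpt takes the form $\sum_k \mathbf{1}_{(s_k, t_k] \times A_k} h_k$ with $A_k \in \mathcal{F}_{s_k}$ and $h_k \in H$ (after identifying $H$ with $H^*$) — the integral \eqref{intnorm} is the continuous local martingale $\sum_k \mathbf{1}_{A_k}(Mh_k(t_k \wedge \cdot) - Mh_k(s_k \wedge \cdot))$, and bilinearity of covariation together with \eqref{theorembrownianrepresenrationeq1} gives
\begin{equation*}
\Bigl[\int_0^\cdot f\, dM\Bigr]_T = \sum_{k,l}\mathbf{1}_{A_k \cap A_l} \int_0^T \mathbf{1}_{(s_k,t_k] \cap (s_l,t_l]}(s)\, \langle G^*(s) h_k, G^*(s) h_l\rangle\, ds = \int_0^T \|G^*(s) f(s)\|^2\, ds,
\end{equation*}
i.e.\ \eqref{quadvarint} in the elementary case.

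For general $f$ I would localize by $\tau_n := \inf\{t : \int_0^t \|G^* f\|^2\,ds \geq n\}$ to reduce to $\E \int_0^T \|G^* f\|^2\,ds < \infty$, and then produce elementary $f_k$ with $\E \int_0^T \|G^*(f - f_k)\|^2\,ds \to 0$. By the elementary case the integrals $\int f_k\, dM$ are Cauchy in the quadratic-variation norm, hence in ucp by \cite[Proposition~17.6]{Kal}; $\int_0^\cdot f\, dM$ is then defined as this ucp limit, and continuity of the quadratic-variation map under ucp convergence upgrades the elementary identity to \eqref{quadvarint}.

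The main obstacle is the approximation step, because $G^*$ is only closed: convergence of $f_k$ to $f$ in $L^2(\Omega \times [0,T]; H)$ does not control $G^*(f - f_k)$. The remedy I would use is spectral truncation. Writing $G^* = V|G^*|$ with $|G^*| = (GG^*)^{1/2}$ and setting $Q_n := \mathbf{1}_{[0,n]}(|G^*|)$, one has $\operatorname{ran}(Q_n) \subseteq \operatorname{dom}(G^*)$, $\|G^* Q_n\| \leq n$, $Q_n f \to f$, and $G^* Q_n f \to G^* f$ pointwise and (by dominated convergence) in $L^2$. It then suffices to approximate the bounded process $Q_n f$ by elementary $\phi_k$ in $L^2(\Omega \times [0,T]; H)$; applying $Q_n$ to $\phi_k$ to land in $\operatorname{dom}(G^*)$ and using the bound $\|G^*(Q_n \phi_k - Q_n f)\|_{L^2} \leq n\|Q_n\phi_k - Q_n f\|_{L^2}$ yields the required convergence. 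The delicate points — joint measurability of $Q_n$ in $(\omega, s)$, progressive measurability of $Q_n f$, and the fact that $Q_n \phi_k$ is not itself elementary so that the formula \eqref{quadvarint} established for elementary integrands must first be extended to such composed integrands — are precisely the issues addressed by the paper's extension of Ondrej\'at's bounded Borel calculus from \cite{Ond1, Ond2} to closed operator-valued coefficients, and this is where I expect the main technical work to lie.
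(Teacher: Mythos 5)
Your elementary-integrand computation and the localization step are fine and agree with the paper, but the approximation step --- which is the entire difficulty here --- has a genuine gap, and the appeal to ``the paper's extension of Ondrej\'at's Borel calculus'' does not close it. Two problems. First, measurability of $Q_n=\mathbf 1_{[0,n]}((GG^*)^{1/2})$: the paper's Lemma \ref{Borelcalculus1} gives measurable Borel calculus for $f^*f$ under the hypothesis that $f^*h_i$ is measurable; applied to produce functions of $GG^*$ it would require $(G^*)^*h_i=Gh_i$ to be defined and measurable, which is \emph{not} among the hypotheses (only $G^*x^*$ is assumed defined a.e.\ and progressively measurable). The lemma is used in the paper for $G^*G$, never for $GG^*$. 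Second, and more seriously, your reduction is circular: $Q_nf$ and $Q_n\phi_k$ are neither elementary nor valued in a fixed finite-dimensional subspace, so the identity \eqref{quadvarint} you established for elementary integrands does not apply to them, and \emph{defining} $\int Q_n\phi_k\,dM$ is an instance of the very problem you are solving. If you try to approximate $Q_n\phi_k$ by genuinely elementary $g_m$, the closedness of $G^*$ gives you no control on $G^*(g_m-g_{m'})$ --- the differences need not lie in $\text{ran }(Q_n)$ where the bound $\|G^*Q_n\|\le n$ is available --- so the Cauchy argument in the quadratic-variation norm cannot be started. The truncation only trades the unbounded-$G^*$ obstruction for the same obstruction on a smaller class of integrands.

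The paper escapes this with a different device, Lemma \ref{technical}: for $H_k=\text{span}(h_1,\dots,h_k)$ it produces progressively measurable $\tilde P_k$ (the orthogonal projection onto $G^*(H_k)$) and a \emph{finite-rank right inverse} $L_k:\mathbb R_+\times\Omega\to\mathcal L(H,H_k)$ with $G^*L_k=\tilde P_k$, and sets $f_k:=L_kG^*f$. This kills both problems at once: $f_k$ takes values in the \emph{fixed} finite-dimensional subspace $H_k$, so after redefining $G^*$ on $H_k$ (where the closed operator is automatically bounded) one is in the genuinely finite-dimensional case where the M\'etivier--Pellaumail isometry gives \eqref{quadvarint}; and simultaneously $G^*f_k=\tilde P_kG^*f$, so $\|G^*(f_k-f)\|\to 0$ pointwise with the monotone domination $\|G^*f_k\|\le\|G^*f\|$, which is exactly what the Cauchy/dominated-convergence argument needs. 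To repair your proof you would either have to import Lemma \ref{technical} (at which point you are reproducing the paper's argument), or prove a measurable functional calculus for $GG^*$ together with an integration theory for operator-range-valued integrands from scratch; neither is supplied by the lemmas you cite.
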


\begin{proof}
 Applying the stopping time argument one can restrict the proof to the case
 $\mathbb E[\int_0^{\infty}\langle G^*f,G^*f\rangle ds] < \infty$. First of all 
it is easy to construct
 the stochastic integral if $\text{ran }(f)\subset H_0$, where $H_0$ is a fixed 
finite dimensional subspace of $H$,
 since in this case by redefining $G^*$ one can assume that $G^*h$ is 
well-defined for all $h\in H_0$,
so we just work with a finite dimensional martingale for which 
\eqref{quadvarint} obviously holds according to the isometry 
\cite[Chapter~14.6]{MP}.

 The general case can be constructed in the following way. Let $(h_i)_{i\geq 1}$ 
be an orthonormal basis
 of $H$. For each $k\geq 1$ set $H_k = \text{span}(h_1,\ldots,h_k)$. Then by 
Lemma \ref{technical} one can construct scalarly progressively measurable 
$\tilde P_k:\mathbb R_+\times\Omega
 \to \mathcal L (H)$, which is an orthogonal projection onto $G^* (H_k)$, and a 
scalarly progressively measurable $L_k:\mathbb R_+\times\Omega
 \to \mathcal L (H,H_k)$ such that $G^*L_k = \tilde P_k$.
 Let $f_k = L_k G^* f$. Then from \eqref{quadvarint} for $f_k$, the fact that 
$\|G^*f_k\| = \|\tilde P_kG^*f\|\nearrow \|G^*f\|$ and $\|G^*f_k - G^*f\|\to 0$ 
$\mathbb P\times \ud s$ a.s.,
 dominated convergence theorem, \cite[Proposition~17.6]{Kal} and the fact that 
$\text{ran }f_k \subset H_k$ one can construct stochastic integral
 $\int_0^{\cdot}fdM$ and \eqref{quadvarint} holds true.
\end{proof}

\begin{remark}\label{rem:extrem31}
 Using the previous theorem one can slightly extend \cite[Remark 31]{Ond1} in 
the following way: let $\Psi:\mathbb R_+\times\Omega \to \mathcal L(H)$ be 
progressively scalarly measurable such that $G^*\Psi^*\in \mathcal L(H)$ a.s.\ 
for all $t\geq 0$ and $\|G^*\Psi^*\|\in L^2_{\rm loc}(\mathbb R_+)$ a.s. Then 
one can define $N\in \mathcal M_{\rm cyl}^{\rm loc}(H)$ as follows:
 $$
 Nh = \int_0^{\cdot} \Psi^* h \ud M.
 $$
 Moreover, then for each progressively measurable $\phi:\mathbb 
R_+\times\Omega\to H$ such that $\|G^*\Psi^*\phi\|\in L^2_{\rm loc}(\mathbb 
R_+)$ a.s. one has that
 \begin{equation}\label{eq:rem31}
   \int_0^{\cdot}\phi \ud N = \int_0^{\cdot}\Psi^* \phi \ud M.
 \end{equation}
\end{remark}

\begin{theorem}\label{theorembrownianrepresenration}
 Let $X$ be a separable reflexive Banach space, $M \in \mathcal M_{\rm cyl}^{\rm 
loc}(X)$. Then $M$ is closed operator-Brownian representable if and only if it 
has a closed operator-generated covariation.
\end{theorem}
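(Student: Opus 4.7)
The forward direction is immediate. If $Mx^* = \int_0^{\cdot} G^* x^*\, \ud W_H$, then Proposition~\ref{stochintabscont} applied to the cylindrical Brownian motion $W_H$ (which trivially has closed operator-generated covariation, with $G = I_H$) yields $[Mx^*]_t = \int_0^t \|G^*x^*\|^2 \ud s$ a.s., and by polarization $[Mx^*, My^*]_t = \int_0^t \langle G^*x^*, G^*y^*\rangle \ud s$. Hence $M$ has closed operator-generated covariation through the same $G$.

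For the reverse direction, assume $M$ has closed operator-generated covariation through some $G: \mathbb R_+ \times \Omega \to \mathcal L_{cl}(H,X)$. The plan is to enlarge the probability space to carry an auxiliary $H$-cylindrical Brownian motion $\tilde W$ independent of $M$, and to build $W_H$ as the sum of a part on $\overline{\mathrm{ran}\, G^*(s,\omega)}$ reconstructed from $M$ via a measurable pseudo-inverse of $G^*$, plus a part on its orthogonal complement taken from $\tilde W$. Using separability of $X^*$ (guaranteed by reflexivity of $X$), pick an increasing sequence $X^*_k \subset X^*$ of finite-dimensional subspaces with dense union. By an analog of Lemma~\ref{technical} applied to the pair $(G,G^*)$, produce scalarly progressively measurable $\tilde P_k : \mathbb R_+ \times \Omega \to \mathcal L(H)$, the orthogonal projection onto the finite-dimensional subspace $G^*(s,\omega)(X^*_k) \subset H$, together with $L_k : \mathbb R_+ \times \Omega \to \mathcal L(H, X^*_k)$ satisfying $G^* L_k = \tilde P_k$. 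For each fixed $h \in H$ define
\[
W_H^{(k)}(h) = (L_k h)\cdot M + \int_0^{\cdot}(I - \tilde P_k) h \ud \tilde W,
\]
where $(L_k h)\cdot M$ is the scalar stochastic integral of the $X^*_k$-valued progressively measurable process $L_k h$ against $M$ (well-defined after choosing a basis of $X^*_k$ and integrating each scalar coefficient against the corresponding $Me^*_i$). The covariation hypothesis gives $[W_H^{(k)}(h)]_t = \int_0^t(\|\tilde P_k h\|^2 + \|(I-\tilde P_k) h\|^2) \ud s = t\|h\|^2$, and since $\tilde P_k h \to \pi_s h$ pointwise in $H$ with $\pi_s$ the projection onto $\overline{\mathrm{ran}\, G^*(s,\omega)}$, the sequence is Cauchy in $\mathcal M^{\rm loc}$ with a continuous local martingale limit $W_H(h)$ satisfying $[W_H(h)]_t = t\|h\|^2$. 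Linearity in $h$ together with L\'evy's characterization (applied to arbitrary linear combinations) then shows that $W_H$ is an $H$-cylindrical Brownian motion on the enlarged space.

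To verify the representation $Mx^* = \int_0^{\cdot} G^* x^* \ud W_H$, one computes the quadratic variation of the difference using the approximants $W_H^{(k)}$: since $G^*x^* \in \mathrm{ran}\, G^*(s,\omega) = \mathrm{ran}\, \pi_s$, the $\tilde W$-part of $\int G^*x^* \ud W_H$ contributes nothing, while the identity $G^* L_k = \tilde P_k$ makes the cross-variation of $Mx^*$ with the $M$-part match $\int_0^t \|G^*x^*\|^2 \ud s$ in the limit; the three contributions cancel in $[Mx^* - \int G^* x^* \ud W_H]_t$, forcing the desired equality of continuous local martingales. I expect the main obstacle to lie in the measurable construction of the pseudo-inverse $L_k$ for the closed operator $G^*$ and in verifying the appropriate strong convergence $\tilde P_k \to \pi$, since this requires extending Ondrej\'at's bounded Borel calculus techniques to the closed-operator setting --- precisely the Banach-valued version of Lemma~\ref{technical} used in the proof of Proposition~\ref{stochintabscont}. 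Once this technical core is in place, the remainder is a standard truncation-and-limit argument.
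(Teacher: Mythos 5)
Your overall strategy is sound and it is genuinely different from the paper's. The paper first treats the Hilbert case by a spectral decomposition: it writes $(0,\infty)=\bigcup_n B_n$ with $\text{dist}(B_n,\{0\})>0$, applies the measurable Borel calculus of Lemma~\ref{Borelcalculus1} to $G^*G$ to produce the bounded pseudo-inverses $G\psi_n(G^*G)$, $\psi_n(t)=t^{-1}\mathbf 1_{B_n}$, sets $W(x^*)=\sum_{n\ge1}\int G\psi_n(G^*G)x^*\ud M+\int\psi_0(G^*G)x^*\ud\overline W$, and only then passes to general reflexive $X$ through Kuo's embedding $j:X\to H$. You instead exhaust $X^*$ by finite-dimensional subspaces and use the Gram--Schmidt pseudo-inverses $L_k$ of Lemma~\ref{technical}; this bypasses the functional calculus and the Hilbert-space reduction entirely, which is a real simplification. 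The spectral route buys an automatic exhaustion of $\overline{\text{ran}\,(G^*)}$ (the projections $\mathbf 1_{B_n}(G^*G)$ sum to the projection onto $(\ker G)^\perp$); your route buys elementary finite-dimensional linear algebra at the price of having to justify the exhaustion by hand. The quadratic-variation computations you indicate do go through: $[(L_kh)\cdot M]_t=\int_0^t\|\tilde P_kh\|^2\ud s$, vanishing of the cross-bracket with the $\tilde W$-part by independence, Cauchyness from $\|(\tilde P_k-\tilde P_m)h\|\to0$, and L\'evy's characterization.

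There is, however, one genuine gap, at exactly the spot you flag --- but your proposed fix is the wrong one. The strong limit of $\tilde P_k$ is the projection onto $\overline{\bigcup_kG^*(X_k^*)}$, and for a closed \emph{unbounded} $G^*$ this can be a proper subspace of $\overline{\text{ran}\,(G^*)}$ unless $\bigcup_kX_k^*$ is a core of $G^*$; density of $\bigcup_kX_k^*$ in $X^*$ is not enough, and no extension of the bounded Borel calculus will repair this (this is precisely why Theorem~\ref{thm:closedcovmain} must assume a fixed core for its part (1)). What actually saves your argument is the continuity of $M:X^*\to\mathcal M^{\rm loc}$ combined with the covariation identity: if $y_n^*\in\bigcup_kX_k^*$ and $y_n^*\to x^*$ in $X^*$, then $[M(y_n^*-x^*)]_T\to0$ in probability, hence $\int_0^T\|G^*(y_n^*-x^*)\|^2\ud s\to0$ in probability, so along a subsequence $G^*y_n^*\to G^*x^*$ $\mathbb P\times\ud s$-a.e.; therefore $G^*x^*\in\overline{\bigcup_kG^*(X_k^*)}$ a.e.\ for each fixed $x^*$, which is all that the final cross-variation computation $[W_H(h),Mx^*]=\int_0^{\cdot}\langle h,\pi G^*x^*\rangle\ud s$ requires. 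This is the same device the paper uses in its last paragraph to pass from $j^*(H)$ to all of $X^*$. With that insertion (plus the routine remark that $(L_kh)\cdot M$ should be read as the classical vector integral with respect to the finite-dimensional martingale $(Me_1^*,\dots,Me_d^*)$, whose existence needs only $\int\|G^*L_kh\|^2\ud s<\infty$ rather than square-integrability of each coordinate separately), your proof is complete.
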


Let $n\geq 1$, $X_1,\ldots,X_n$ be Banach spaces, 
$A_{k}\in\mathcal L_{cl}(X_k,X_{k+1})$ for $1\leq k\leq n-1$. Then we say that 
$A_{n-1}\ldots A_1$ is {\it well-defined} if $\text{ran }(A_{k-1}\ldots A_1) \in 
\text{dom } (A_k)$ for each $2\leq k\leq n-1$.

\vspace{.1cm}

\begin{proof}
Suppose that $M$ is closed operator-Brownian representable. Let a separable 
Hilbert space $H$, an $H$-cylindrical Brownian motion $W_H$ and $G: \mathbb R_+ 
\times \Omega \to \mathcal L_{cl}(H,X)$ be such that a.s.\ $Mx^*= \int_0^{\cdot} 
G^*x^*dW_H$ for each $x^*\in X^*$. Then according to \cite[Theorem 4.27]{DPZ} 
a.s.\
$$
[Mx^*,My^*] = \int_0^{\cdot}\langle G^*x^*, G^*y^*\rangle\ud s,\;\;\; x^*,y^*\in 
X^*.
$$

 To prove the other direction assume that there exist such a separable Hilbert 
space $H$ and $G: \mathbb R_+ \times \Omega \to \mathcal L_{cl}(H,X)$ that 
\eqref{theorembrownianrepresenrationeq1} holds. The proof that
 $M$ is Brownian representable will be almost the same as the proof of 
\cite[Theorem~2]{Ond1}, but one
 has to use Lemma \ref{Borelcalculus1} instead of \cite[Proposition~32]{Ond1} 
and apply
 \cite[Proposition~3.6]{Ond2} for general Banach spaces.

 Suppose first that $X$ is a Hilbert space (one then can identify $H$, $X$ and 
$X^*$). Let $\overline {W}_{X}$ be an independent of $M$ $X$-cylindrical Wiener 
process on an enlarged
 probability space $(\overline {\Omega}, \overline{\mathbb F}, \overline{\mathbb 
P})$ with the enlarged
 filtration $\overline{\mathbb F} = (\overline {\mathcal F}_t)_{t\geq 0}$. Let 
$(0,\infty) = \cup_{n=1}^{\infty} B_n$ be
a decomposition into disjoint Borel sets such that $\text{dist}(B_n,\{0\})>0$ 
for each $n\geq 1$. Define
functions $\psi_n(t)=t^{-1}\mathbf 1_{B_n}$, $t\in\mathbb R$, $n\geq 1$, and 
$\psi_0 = \mathbf 1_{\{0\}}$.
Let us also denote $C_n = \mathbf 1_{B_n}$, $n\geq 1$, and $C_0 = \psi_0$. By 
Lemma \ref{Borelcalculus1} for each $n\geq 1$
$\psi_n(G^*G)$ and $C_n(G^*G)$ are $\mathcal L(X)$-valued strongly progressively 
measurable,
and since
$$
\|G^*G\psi_n(G^*G)\|_{\mathcal L(X)} = \|C_n(G^*G)\|_{\mathcal L(X)} \leq 
\|\mathbf 1_{B_n}\|_{L^{\infty}(\mathbb R)} = 1,\;\;\; n\geq 1,
$$
$$
\|\psi_0(G^*G)\|_{\mathcal L(X)}\leq \|\mathbf 1_{\{0\}}\|_{L^{\infty}(\mathbb 
R)} = 1,
$$
then $\text{ran }(\psi_n(G^*G))\subset \text{dom } (G^*G)\subset \text{dom } (G)$ (see 
\cite[p.347]{RSN}),
so $G\psi_n(G^*G)$ is well-defined, and according to \cite[Problem 
III.5.22]{Kato1} $G\psi_n(G^*G) \in \mathcal L(X)$ $\mathbb P\times \ud s$-a.s., 
therefore for each $x^*\in X^*$ by Proposition \ref{stochintabscont} 
$G\psi_n(G^*G)x^*$ is
stochastically integrable with respect to $M$ and one can define
$$
W_n(x^*) = \int_0^{\cdot}G\psi_n(G^*G)x^*\ud M,\;\;\;n\geq 1,\;\;\; W_0 = 
\int_0^{\cdot}\psi_0(G^*G)x^*\ud\overline W_X.
$$

Then by \eqref{quadvarint} $[W_n(x^*),W_m(y^*)]=0$ for each $x^*,y^* \in X^*$ 
and $0\leq n<m$ and
$[W_n (x^*)] = \int_0^{\cdot} \|C_n(G^*(s)G(s))x^*\|^2\ud s$, so for each $t\geq 
0$
\begin{align*}
\sum_{n=0}^{\infty}[W_n (x^*)]_t =\sum_{n=0}^{\infty} 
\int_0^t\|C_n(G^*G)x^*\|^2\ud s &= \sum_{n=0}^{\infty} \int_0^t\langle 
C_n(G^*G)x^*,x^*\rangle \ud s\\
&= \int_0^t\langle x^*,x^*\rangle ds = t\|x^*\|^2.
\end{align*}

Let us define $W(x^*) = \sum_{n=0}^{\infty}W_n(x^*)$. Thanks to 
\cite[Proposition 28]{Ond1} this sum converges in $C([0,\infty))$ in 
probability. It is obvious that $W:X^*\to \mathcal M^{\rm loc}$, $x^*\mapsto 
W(x^*)$ is an $H$-cylindrical Brownian motion. Moreover, for each $h\in X^*$ one 
has that by the definition of $G^*$ the $H$-valued function $G^*h$ is 
stochastically integrable with respect to $W_H$, and $[G^*h\cdot W] = [Mh] = 
\int_0^{\cdot} G^*(s)h\ud s$ a.s. So, to prove that $Mh$ and $G^*h\cdot W$ are 
indistinguishable it is enough to show that a.s.\ $[G^*h\cdot 
W,Mh]=\int_0^{\cdot} G^*(s)h\ud s$. By Remark \ref{rem:extrem31} and the fact 
that a.s.\ $\|G^*G\psi_n(G^*G)\|\leq \mathbf 1_{\mathbb R_+}\in L^2_{\rm 
loc}(\mathbb R_+)$ one can apply \eqref{eq:rem31} so for each $n\geq 1$
\begin{align*}
  [\int_0^{\cdot} G^*h\ud W_n,Mh]&=[\int_0^{\cdot} G\psi_n(G^*G)G^*h \ud 
M,\int_0^{\cdot} h\ud M]\\
  &=\int_0^{\cdot}\langle G^*G\psi_n(G^*G)G^*h,G^*h\rangle\ud s.
\end{align*}
On the other hand $\widetilde{W}$ and $M$ are independent so $[G^*h\cdot 
W_0,Mh]=0$. To sum up one has that a.s.\
\begin{align*}
 [G^*h\cdot W,Mh] = \sum_{n=1}^{\infty}\int_0^{\cdot}\langle 
C_n(G^*G)G^*h,G^*h\rangle
 =\int_0^{\cdot}\|G^*h\|^2\ud s,
\end{align*}
which finishes the proof for a Hilbert space case.

 Now consider a general reflexive Banach space $X$. Let a separable Hilbert 
space $H$, $j:X \to H$ be defined as in \cite[p.154]{Kuo}. Let $W_H$ be 
constructed as above but for $M|_{H}$. Fix $x^*\in X^* $ and find
 $(h_n)_{n\geq 1},\subset H$ such that $\lim_{n\to \infty}j^*h_n=x^*$ and 
$[M(j^*h_n)-M(x^*)]_T$ vanishes almost everywhere
 for each $T>0$. By the definition of $G$ one has $\int_0^T 
\|G^*(j^*h_n-x^*)\|^2\ud s\to 0$ in probability for all $T>0$,
 so since $M(j^*h_n) \to M(x^*)$ uniformly on all compacts in probability and by 
\cite[Theorem~4.27]{DPZ}, \cite[Proposition~17.6]{Kal} and Proposition 
\ref{stochintabscont} $M(x^*)$ and $G^*x^*\cdot W_H$ are indistinguishable.
\end{proof}

Thanks to this representation theorem we obtain the following  
stochastic integrability result.

\begin{theorem}\label{theorem:inttheory}
Let $X$ be a reflexive separable Banach space, $M\in \mathcal M_{\rm cyl}^{\rm 
loc}(X)$ be closed operator-Brownian representable, $G:\mathbb R_+\times\Omega 
\to \mathcal L_{cl}(H,X)$ be the corresponding operator family. Let $f:\mathbb 
R_+\times \Omega \to X^*$. Suppose there exist elementary progressive 
$f_n:\mathbb R_+\times \Omega \to X^*$, $n\geq 1$, such that $f_n \to f$ 
$\mathbb P\times \ud s$-a.s.\ Assume also that there exists a limit $N := 
\lim_{n\to \infty}f_n \cdot M$ in the ucp topology. Then $f\in \text{ran } (G^*)$ 
$\mathbb P\times \ud s$-a.s., $G^*f$ is progressively measurable and
\begin{equation}\label{eq:quadvarforint}
[N] = \int_0^{\cdot} \|G^*f\|^2\ud s.
\end{equation}
\end{theorem}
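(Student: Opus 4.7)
The plan is to exploit closed operator-Brownian representability, via Theorem \ref{theorembrownianrepresenration}, to relate the approximating stochastic integrals $f_n \cdot M$ to $H$-valued integrals against $W_H$, and then to pass to the limit using closedness of $G^*$. First I would decompose each elementary progressive $f_n$ as a finite sum $\sum_{k} \mathbf 1_{A_{k}^{(n)}} x_{k}^{*(n)}$ with $A_{k}^{(n)} \in \mathcal P$ and $x_{k}^{*(n)} \in X^*$. Since $Mx^* = \int_0^{\cdot} G^* x^* \ud W_H$ for every $x^* \in X^*$, linearity of $M$ and of the stochastic integral yield
\[
 f_n \cdot M = \int_0^{\cdot} G^* f_n \ud W_H, \qquad [f_n\cdot M]_t = \int_0^t \|G^* f_n\|^2 \ud s,
\]
and $G^* f_n$, as a finite linear combination of the progressively measurable processes $G^* x_{k}^{*(n)}$, is itself progressively measurable with values in $H$.

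Next I would convert the ucp convergence of the integrals into $L^2_{\mathrm{loc}}$-convergence of the integrands. The Cauchy property $f_n\cdot M - f_m\cdot M \to 0$ in ucp gives, via the quadratic variation identity above, $\int_0^T \|G^*(f_n - f_m)\|^2 \ud s \to 0$ in probability for every $T \geq 0$. After the standard localisation (stopping at times where $\int_0^{\cdot} \|G^* f_n\|^2 \ud s$ first exceeds a prescribed level), I can pass to a subsequence $(n_k)$ for which $G^* f_{n_k}$ is Cauchy in $L^2((0,T)\times\Omega;H)$ for each $T$ in a countable dense set, and then to a further diagonal subsequence along which $G^* f_{n_k}(s,\omega) \to \phi(s,\omega)$ in $H$ for $\mathbb P\times\ud s$-a.e.\ $(s,\omega)$, with $\phi$ progressively measurable.

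At this point closedness is invoked pointwise: for $\mathbb P\times\ud s$-a.e.\ $(s,\omega)$, the pair $(f_{n_k},\, G^* f_{n_k})(s,\omega)$ lies in the graph of $G^*(s,\omega)$ and converges to $(f(s,\omega),\, \phi(s,\omega))$ in $X^* \times H$, using the hypothesis $f_n \to f$ $\mathbb P\times\ud s$-a.s.\ together with the previous step. Closedness then forces $f(s,\omega)$ to lie in the domain of $G^*(s,\omega)$ with $G^*(s,\omega) f(s,\omega) = \phi(s,\omega)$, which yields $f \in \text{ran }(G^*)$ $\mathbb P\times\ud s$-a.s.\ and progressive measurability of $G^* f$ up to modification on a null set. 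Combining $[f_n\cdot M]_T \to [N]_T$ in probability with $\int_0^T \|G^* f_n\|^2 \ud s \to \int_0^T \|G^* f\|^2 \ud s$ in probability yields \eqref{eq:quadvarforint}.

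The hard part is the coordination between the two modes of convergence: $f_n \to f$ holds $\mathbb P\times\ud s$-a.s.\ in $X^*$ by hypothesis, but to invoke closedness of $G^*$ one needs simultaneous pointwise convergence of $G^* f_n$ in $H$ on the same full-measure set, whereas the hypotheses only furnish this in the form of ucp convergence of the integrals. The localisation together with the double subsequence extraction is what converts the isometry $[f_n\cdot M] = \int_0^{\cdot} \|G^* f_n\|^2 \ud s$ into precisely the pointwise $H$-valued compatibility required to apply closedness.
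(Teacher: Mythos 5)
Your proof is correct, but it follows a genuinely different route from the paper's. You convert the ucp--Cauchy property of $(f_n\cdot M)_{n\geq 1}$ directly into a Cauchy property of $(G^*f_n)_{n\geq 1}$ in $L^2(0,T;H)$ in probability, via the isometry $[(f_n-f_m)\cdot M]_T=\int_0^T\|G^*(f_n-f_m)\|^2\ud s$ and \cite[Proposition~17.6]{Kal}, then extract an a.e.\ convergent subsequence and apply closedness of $G^*$ to identify the limit as $G^*f$. The paper instead first invokes Lemma~\ref{lemma:appB1} to show that $[N]$ is absolutely continuous, builds a candidate integrand $V$ intrinsically from $N$ by taking Radon--Nikodym derivatives $v_n$ of the brackets $[N,W_Hh_n]$, controlling them with the Kunita--Watanabe inequality ($|v_n|\leq v^{1/2}\|h_n\|$) and assembling them into an $H$-valued progressively measurable $V$; it then shows $\tilde N:=V\cdot W_H$ coincides with $N$ by a polarization computation, deduces $G^*f_n\to V$ in $L^2$, and only at the very end uses closedness --- exactly as you do. Your argument is shorter and avoids both Lemma~\ref{lemma:appB1} and the Radon--Nikodym construction for this theorem; what the paper's detour buys is a representation of the limit $N$ as $V\cdot W_H$ with $V$ defined independently of the approximating sequence, though this is not needed for the three stated conclusions. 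Two small points you should make explicit when writing this up: the subsequence extraction must be diagonalized over a countable set of times $T$ to get a single subsequence along which $G^*f_{n_k}\to\phi$ holds $\mathbb P\times\ud s$-a.e.\ on all of $\mathbb R_+$ (the simultaneous a.e.\ convergence of $f_{n_k}\to f$ is automatic since it holds for the full sequence); and the localisation you mention is actually unnecessary --- a Borel--Cantelli argument applied to $\mathbb P\{\int_0^T\|G^*(f_{n_k}-f_{n_{k+1}})\|^2\ud s>4^{-k}\}$ already gives almost sure convergence of $G^*f_{n_k}$ in $L^2(0,T;H)$, which yields both the pointwise subsequence and the convergence $\int_0^T\|G^*f_{n_k}\|^2\ud s\to\int_0^T\|G^*f\|^2\ud s$ needed for \eqref{eq:quadvarforint}.
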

We then call $f$ stochastically integrable and define
$$
f\cdot M = \int_0^{\cdot}f\ud M:=\lim_{n\to \infty}f_n\cdot M,
$$
where the limit is taken in the ucp topology.

\begin{proof}
Formula \eqref{eq:quadvarforint} is obvious for elementary progressive $f$ by 
\eqref{quadvarint}. Since $[f_n\cdot M]$ is absolutely continuous and $f_n\cdot 
M$ tends to $N$ in ucp, by Lemma \ref{lemma:appB1} and the fact that $[N]$ is 
a.s.\ continuous one can prove that $[N]$ is absolutely continuous, hence by 
\cite[Lemma~3.10]{VY} its derivative in time $v:\mathbb R_+\times \Omega \to 
\mathbb R$ is progressively measurable. Let a Hilbert space $H$ and an 
$H$-cylindrical Brownian motion $W_H$ be constructed for $M$ by Theorem 
\ref{theorembrownianrepresenration}. Assume that $(h_n)_{n\geq 1}\subset H$ is a 
dense subset of a unite ball in $H$. Then $[N, W_H h_n]$ is a.s.\ absolutely 
continuous, and has a progressively measurable derivative $v_n$ for each $n\geq 
1$. Moreover, since by \cite[Proposition 17.9]{Kal} $|\mu_{[N, W_H 
h_n]}|(I)\leq\mu_{[N]}^{1/2}(I)\mu_{[W_H h_n]}^{1/2}(I)$ for each interval $I 
\subset \mathbb R_+$ and thanks to \cite[Theorem~5.8.8]{Bog} one has that $v_n 
\leq v^{1/2}\|h_n\|$ $\mathbb P\times\ud s$-a.s.\ Then thanks to linearity, boundedness and denseness 
of $\text{span}(h_n)_{n\geq 1}$ in $H$, we obtain that there exists a 
progressively measurable process $V:\mathbb R_+\times\Omega \to H$ such that 
$v_n = \langle V,h_n\rangle$ $\mathbb P\times\ud s$-a.s.\ Let $\tilde N = V 
\cdot W_H$. Then $[ N, \Phi\cdot W_H] = [\tilde N, \Phi\cdot W_H]$ for each 
stochastically integrable $\Phi:\mathbb R_+ \times \Omega \to H$, hence 
$[N,\tilde N] = [\tilde N]$ and $[ N, f_n\cdot M] = [\tilde N, f_n\cdot M]$ for 
each natural $n$. Without loss of generality one can suppose that $\lim_{n\to 
\infty}[N-f_n\cdot M]_T = 0$ a.s.\ for each $T>0$, therefore a.s.\
\begin{align*}
\lim_{n\to \infty}\int_0^T \|G^*f_n - V\|^2\ud s &= \lim_{n\to \infty}[\tilde N 
- f_n\cdot M]_T \\
&=\lim_{n\to \infty} ([\tilde N]_T - [\tilde N,f_n\cdot M]_T)+([f_n\cdot M]_T - 
[ N, f_n\cdot M]_T)\\
&=([\tilde N]_T - [\tilde N,N]_T)+([N]_T - [N]_T) = 0,
\end{align*}
so, $\tilde N$ is a version of $N$. Also by choosing a subsequence one has that 
$G^*f_{n_k}\to V$ as $k\to \infty$, which means that $f \in \text{dom } (G^*)$ 
$\mathbb P\times\ud s$-a.s.\ and \eqref{eq:quadvarforint} holds.
\end{proof}

\begin{remark}\label{rem:afterumdthm}
 It follows from Theorem \ref{theorem:inttheory} and \cite[Proposition 
17.6]{Kal} that for any finite dimensional subspace $Y_0\subset Y$ the 
definition of the stochastic integral can be extended to all strongly 
progressively measurable processes $\Phi:\mathbb R_+\times \Omega \to \mathcal 
L(X,Y_0)$ that satisfy $(G^*\Phi^*)^*\in L^2(\mathbb R_+;\mathcal L(H,Y_0))$ 
a.s.\ (or equivalently $(G^*\Phi^*)^*$ is scalarly in $L^2(\mathbb R_+;H)$ 
a.s.). Moreover, then $\Phi\cdot M = (G^*\Phi^*)^* \cdot W_H$.
\end{remark}

We proceed with a result which is closely related to \cite[Theorem 3.6]{NVW}. 
In order to state it we need the following terminology. 

A Banach space $X$ is called a {\it UMD Banach space} if for some (or 
equivalently, for all)
$p \in (1,\infty)$ there exists a constant $\beta>0$ such that
for every $n \geq 1$, every martingale
difference sequence $(d_j)^n_{j=1}$ in $L^p(\Omega; X)$, and every $\{-1, 
1\}$-valued sequence
$(\varepsilon_j)^n_{j=1}$
we have
$$
\Bigl(\mathbb E \Bigl\| \sum^n_{j=1} \varepsilon_j d_j\Bigr\|^p\Bigr )^{\frac 
1p}
\leq \beta \Bigl(\mathbb E \Bigl \| \sum^n_{j=1}d_j\Bigr\|^p\Bigr )^{\frac 1p}.
$$
The infimum over all admissible constants $\beta$ is denoted by $\beta_{p,X}$. 

There is a large body of results asserting that the class of UMD Banach spaces
is a natural one when pursuing vector-valued generalizations of scalar-valued 
results in harmonic and stochastic analysis. UMD spaces enjoy  
many pleasant properties, among them being reflexive. We 
refer the reader to \cite{Burk01,HNVW1,Rubio86} for details.

\vspace{.1cm}

Let $(\gamma_n')_{n \geq 1}$ be a sequence of independent standard Gaussian 
random variables
on a probability space $(\Omega', \mathcal F', \mathbb P')$ and let $H$ be a 
Hilbert space. A bounded operator $R \in\mathcal L (H, X)$ is said to be 
{\it $\gamma$-radonifying}
if for some (or equivalently for each) orthonormal basis $(h_n)_{n\geq 1}$ of 
$H$ the Gaussian series
$\sum_{n\geq 1} \gamma_n' Rh_n$
converges in $L^2 (\Omega'; X)$. We then define
$$
\|R\|_{\gamma(H,X)} :=\Bigl(\mathbb E'\Bigl\|\sum_{n\geq 1} \gamma_n' Rh_n 
\Bigr\|^2\Bigr)^{\frac 12}.
$$
This number does not depend on the sequence $(\gamma_n')_{n\geq 1}$ and the 
basis $(h_n)_{n\geq 1}$, and
defines a norm on the space $\gamma(H, X)$ of all $\gamma$-radonifying operators 
from $H$ into
$X$. Endowed with this norm, $\gamma(H, X)$ is a Banach space, which is 
separable if $X$
is separable. For a Hilbert space $X$, the space  $\gamma(H, X)$
is isometrically isomorphic to the space of all Hilbert-Schmidt 
operators from $H$ to $X$. If $(S,\mathcal A, \mu)$ is a measure space and $X = L^p(S)$, then $\gamma(H,L^p(S)) = L^p(S;H)$ up to equivalence of norms.

For all $R \in\gamma(H, X)$ it holds that $\|R\| \leq \|R\|_{\gamma(H, X)}$.
Let $\overline{H}$ be another Hilbert space and let $Y$ be another Banach space. Then 
the so-called {\it ideal property}
(see \cite{HNVW2}) holds true: for all $S \in\mathcal 
L(\overline{H},H)$ and all $T \in \mathcal L(X,Y)$
we have $TRS \in \gamma(\overline{H},Y)$ and
\begin{equation}\label{ideal}
\|TRS\|_{\gamma(\overline{H},Y)}\leq \|T\| \|R\|_{\gamma(H,X)}\|S\|.
\end{equation}

Let $X,Y$ be Banach spaces, and let $A\in \mathcal L_{cl}(X,Y)$. A linear 
subspace $X_0\subset \text{dom }(A)$ is a {\it core} 
of $A$ if the closure of an 
operator $A|_{X_0}:X_0 \to Y$ is $A$ (see details in \cite{Kato1}). Let 
$(S,\Sigma,\mu)$ be a measure space, and let $F:S\to\mathcal L_{cl}(X,Y)$ be such that 
$Fx$ is a.s.\ defined and measurable for each $x\in X$. Then $F$ has a {\it 
fixed core} if there exists a sequence $(x_n)_{n\geq 1}\subset X$ such that 
$\text{span}(x_n)_{n\geq 1}$ is a core of $F$ a.s.\ (Notice, that in this 
particular case the core has a countable algebraic dimension).

\begin{theorem}\label{thm:closedcovmain}
Let $X$ be a reflexive Banach space, $Y$ be a UMD Banach space, $M \in \mathcal 
M_{\rm cl}^{\rm cyl}(X)$, $G:\mathbb R_+\times \Omega \to \mathcal L_{cl}(H,X)$ 
be the corresponding operator family. Let $G^*$ have a fixed core. Then for a 
strongly progressively measurable process
$\Phi \colon \mathbb R_+ \times \Omega\rightarrow \mathcal L(X,Y)$
such that $G^*\Phi^*\in \mathcal L(Y^*,H)$ a.s.\ and which is scalarly in $ 
L^2(\mathbb R_+;H)$~a.s.\
the following assertions are equivalent:
\begin{enumerate}

 \item[(1)] There exist  elementary progressive processes $(\Phi_n)_{n \geq 1}$ 
such that:
 \begin{enumerate}
  \item[(i)] for all $y^* \in Y^*$, $\displaystyle \limn G^*\Phi_n^*y^* =
  G^*\Phi^*y^*$ in $L^0(\Omega; L^2(\mathbb R_+;H))$;
  \item[(ii)] there exists a process $\zeta \in L^0(\Omega; C_b(\mathbb R_+; 
Y))$ such that
  $$
  \zeta = \lim_{n \to \infty} \int_0^{\cdot} \Phi_n(t)\ud M(t) \;\;\; 
\text{in}\; L^0(\Omega; C_b(\mathbb R_+;Y)).
  $$
 \end{enumerate}
 \item[(2)] There exists an a.s.\ bounded process $\zeta\colon \mathbb R_+\times 
\Omega\to Y$ such that for all $y^* \in Y^*$
 we have
  $$
  \langle\zeta,y^*\rangle = \int_0^{\cdot}\Phi^*(t)y^* \ud M(t) \;\;\;
  \text{in}\: L^0(\Omega; C_b(\mathbb R_+)).
  $$

 \item[(3)] $(G^*\Phi^*)^* \in \gamma(L^2(\mathbb R_+;H), Y)$ almost surely;
\end{enumerate}
In this case $\zeta$ in (1) and (2) coincide and for all $p \in (0, \infty)$ we 
have
\begin{equation}\label{eq:equiv}
\mathbb E \sup_{t \in \mathbb R_+}\|\zeta (t)\|^p \eqsim_{p,Y} \mathbb E 
\|(G^*\Phi^*)^*\|^p_{\gamma (L^2(\mathbb R_+;H), Y)}.
\end{equation}
\end{theorem}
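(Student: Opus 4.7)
The strategy is to transfer the question to the UMD-valued stochastic integration theorem \cite[Theorem 3.6]{NVW} for cylindrical Brownian motions, via the closed-operator Brownian representation from Theorem \ref{theorembrownianrepresenration}. The key identity to exploit is that, informally, ``$\Phi\cdot M = (G^*\Phi^*)^*\cdot W_H$'', at which point the $\gamma$-radonifying condition (3) and the two-sided moment bound \eqref{eq:equiv} become exactly the NVW criterion.

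\textbf{Step 1 (Brownian representation).} Apply Theorem \ref{theorembrownianrepresenration} to obtain, on an enlarged probability space, an $H$-cylindrical Brownian motion $W_H$ with $Mx^* = G^*x^*\cdot W_H$ a.s. for every $x^*\in X^*$. Combining the definition of the integral on elementary progressive processes with Remark \ref{rem:extrem31}/Remark \ref{rem:afterumdthm}, for any strongly progressively measurable $\Phi$ with values in $\mathcal L(X,Y_0)$, $Y_0$ finite-dimensional, and with $(G^*\Phi^*)^*\in L^2(\mathbb R_+;\mathcal L(H,Y_0))$ a.s., one has the pathwise identity $\Phi\cdot M = (G^*\Phi^*)^*\cdot W_H$. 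This takes care of the finite-dimensional-range case and is the bridge between $M$-integrals and $W_H$-integrals.

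\textbf{Step 2 (Measurability via the fixed core).} Use the fixed core assumption to produce a strongly progressively measurable version of $(G^*\Phi^*)^*$ as a process of densely defined operators $H\to Y$. Fix $(x_n^*)_{n\ge 1}\subset X^*$ whose linear span is almost surely a core of $G^*$; then the scalars $\langle y^*,(G^*\Phi^*)^*h\rangle=\langle G^*\Phi^*y^*,h\rangle$ are jointly measurable, and exhausting $\text{dom}(G^*)$ along the core together with the ideal property \eqref{ideal} allows one to construct the required process. This step is what makes condition (3) meaningful.

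\textbf{Step 3 (NVW applied to $W_H$).} Apply \cite[Theorem 3.6]{NVW} to $(G^*\Phi^*)^*$ as an $\mathcal L(H,Y)$-valued integrand against the $H$-cylindrical Brownian motion $W_H$ on the UMD space $Y$. This directly yields the equivalence of (3) with stochastic integrability of $(G^*\Phi^*)^*$ against $W_H$, together with the two-sided $p$-moment estimate \eqref{eq:equiv} (in the form $\mathbb E\sup_t\|(G^*\Phi^*)^*\cdot W_H(t)\|^p\eqsim_{p,Y}\mathbb E\|(G^*\Phi^*)^*\|_{\gamma(L^2(\mathbb R_+;H),Y)}^p$).

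\textbf{Step 4 (Translation of (1) and (2)).} For (3)$\Rightarrow$(2), set $\zeta:=(G^*\Phi^*)^*\cdot W_H$; scalarising and using Step~1, $\langle\zeta,y^*\rangle = (G^*\Phi^*y^*)\cdot W_H = (\Phi^*y^*)\cdot M$, which is (2). For (1)$\Rightarrow$(3) and (1)$\Rightarrow$(2): for elementary $\Phi_n$, the identity of Step~1 gives $\Phi_n\cdot M=(G^*\Phi_n^*)^*\cdot W_H$, so ucp-convergence of $\Phi_n\cdot M$ combined with the scalar convergence in (1)(i) and the NVW estimate forces $(G^*\Phi_n^*)^*$ to be Cauchy in $\gamma(L^2(\mathbb R_+;H),Y)$ in probability, with limit $(G^*\Phi^*)^*$, giving (3); the limit $\zeta$ then realises (2). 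For (2)/(3)$\Rightarrow$(1), build $\Phi_n$ by first projecting $Y^*$ onto a finite-dimensional range using a Schauder-type basis of $Y$ (available since $Y$ is UMD, hence reflexive and separable when needed) and then approximating the core-based expansion $\Phi^*=\sum \langle\cdot,e_k\rangle \Phi^*e_k$ by truncating to finitely many elements of the fixed core of $G^*$; this yields elementary progressive $\Phi_n$ with $\Phi_n^*y^*$ in the core and with convergence in both senses required by (1)(i),(ii), using Theorem \ref{theorem:inttheory} to control the ucp convergence.

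\textbf{Main obstacle.} The hard direction is the passage from the $\gamma$-condition (3) (equivalently (2)) back to the existence of elementary progressive $\Phi_n$ in (1). The obstruction is that an arbitrary $\gamma$-radonifying $(G^*\Phi^*)^*$ need not factor through finite-rank operators whose adjoints land in $\text{dom}(G^*)$; this is precisely where the fixed core hypothesis is indispensable, since it provides a countable family $(x_n^*)\subset\text{dom}(G^*)$ independent of $\omega$ along which one can truncate and still ensure that $\Phi_n^*y^*\in\text{dom}(G^*)$ for every elementary $\Phi_n$. Engineering this approximation so that it simultaneously gives convergence of $G^*\Phi_n^*y^*$ in $L^0(L^2(\mathbb R_+;H))$ and of $\Phi_n\cdot M=(G^*\Phi_n^*)^*\cdot W_H$ in ucp, with the NVW estimates controlling the second through the first, is the technical core of the proof.
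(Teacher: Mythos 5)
Your overall architecture agrees with the paper's: reduce everything to the cylindrical-Brownian integrand $\Psi^* = G^*\Phi^*$ via Theorem \ref{theorembrownianrepresenration} and Remarks \ref{rem:extrem31}--\ref{rem:afterumdthm}, obtain $(2)\Leftrightarrow(3)$ and \eqref{eq:equiv} from \cite[Theorems 5.9 and 5.12]{NVW}, and reduce (1) for $(\Phi,M)$ to (1) for $(\Psi,W_H)$. However, the direction $(3)\Rightarrow(1)$ --- which you correctly single out as the crux --- is not actually proved. Two concrete problems. First, you invoke a ``Schauder-type basis of $Y$''; a UMD space need not be separable, and even a separable reflexive space need not have a Schauder basis, so the expansion $\Phi^*=\sum\langle\cdot,e_k\rangle\Phi^*e_k$ is not available. (The paper instead gets elementary $\gamma$-approximants of $\Psi$ directly from \cite[Proposition 2.12]{NVW}, which needs no basis of $Y$.) Second, and more seriously, ``truncating to finitely many elements of the fixed core'' does not produce the required convergences: the core only guarantees that each individual $x^*\in\mathrm{dom}(G^*)$ is approximable in graph norm by core elements, with an approximating sequence depending on $(t,\omega)$; writing $\Phi_n^*y^*$ as a finite combination of core vectors gives no control on $G^*\Phi_n^*y^*-G^*\Phi^*y^*$ in $L^2(\mathbb R_+;H)$, let alone on $(G^*\Phi_n^*)^*$ in the $\gamma$-norm, which is what (1)(ii) requires through the NVW estimate.

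The missing device is Lemma \ref{technical}: for $U_k=\mathrm{span}(x_1^*,\dots,x_k^*)$ one gets a progressively measurable orthogonal projection $\tilde P_k$ of $H$ onto $G^*(U_k)$ together with a measurable lift $L_k:H\to U_k$ satisfying $G^*L_k=\tilde P_k$. Setting $\Phi_{nk}:=(L_k\tilde P_k\widetilde{\Psi}_n^*)^*$ for elementary $\gamma$-approximants $\Psi_n$ of $\Psi$ gives $G^*\Phi_{nk}^*=\tilde P_k\widetilde{\Psi}_n^*$ exactly --- all unboundedness of $G^*$ is absorbed by the identity $G^*L_k=\tilde P_k$ --- and since $\cup_kU_k$ is a core, $\tilde P_k\to\tilde P_0$ (the projection onto $\overline{\mathrm{ran}(G^*)}$), so \cite[Proposition 2.4]{NVW} and the ideal property \eqref{ideal} yield convergence in $\gamma(L^2(\mathbb R_+;H),Y)$ and hence the ucp convergence in (1)(ii). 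This lifting construction, which moves the approximation problem from $X^*$ (where $G^*$ is unbounded) to $H$ (where everything is bounded), is what your sketch lacks; without it the fixed-core hypothesis cannot be exploited. Your remaining directions are essentially the paper's argument, modulo the minor point that $(G^*\Phi_n^*)^*$ is finite-rank but not elementary progressive as an $\mathcal L(H,Y)$-valued process, so one more approximation via \cite[Theorem 5.9]{NVW} and a diagonal extraction is needed in $(1,\Phi)\Rightarrow(1,\Psi)$ as well.
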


\begin{proof}
Let $\Psi:\mathbb R_+\times\Omega\to\mathcal L(H,Y)$ be such that $\Psi^* = 
G^*\Phi^*$ (recall that UMD spaces are reflexive). Then equivalence of $(2)$ and 
$(3)$ and the formula \eqref{eq:equiv} are just particular cases of 
corresponding parts of \cite[Theorem 5.9]{NVW} and \cite[Theorem 5.12]{NVW} for 
$\Psi$ thanks to Remark \ref{rem:extrem31} (for $(2)$ one also has to apply 
Theorem \ref{theorem:inttheory}).

It remains to prove that $(1)$ for $\Phi$ and $M$ and $(1)$ for $\Psi$ and $W_H$ 
are equivalent. (Notation: $(1,\Phi)\Leftrightarrow (1,\Psi)$).

$(1,\Phi)\Rightarrow(1,\Psi)$: Since $\Phi_n\cdot M$ exists and the range of 
$\Phi_n$ is in a certain fixed finite dimensional space of $Y$, $n\geq 1$, then 
by Remark \ref{rem:afterumdthm} $(G^*\Phi_n^*)^*$ is stochastically integrable 
with respect to $W_H$, so by \cite[Theorem 5.9]{NVW} there exists a sequence 
$(\Psi_{nk})_{k\geq 1}$ of elementary progressive $\mathcal L(H,Y)$-valued 
functions such that $\Psi_{nk}^*y^* \to G^*\Phi_n^*y^*$ in $L^0(\Omega; 
L^2(\mathbb R_+;H))$ for each $y^*\in Y^*$, and $\Psi_{nk}\cdot W_H \to 
\Phi_n\cdot M$ in $L^0(\Omega;C_b(\mathbb R_+;Y))$ as $k\to \infty$. Knowing 
$(1,\Phi)$ one can then find a subsequence $\{\Psi_k\}_{k\geq1}:= 
\{\Psi_{n_kk}\}_{k\geq1}$ such that $\lim_{k\to\infty}\Psi_k^*y^*=G^*\Phi^*y^* = 
\Psi^*y^*$ in $L^0(\Omega; L^2(\mathbb R_+;H))$ for each $y^*\in Y^*$ and 
$\Psi_{k}\cdot W_H$ converges in $L^0(\Omega;C_b(\mathbb R_+;Y))$, which is 
$(1)$ for $\Psi$ and $W_H$.

$(1,\Psi)\Rightarrow(1,\Phi)$: Let $(x_k^*)_{k\geq 1}\subset X^*$ be such that 
$U := \text{span }(x_k^*)_{k\geq 1}$ is a fixed core of $G^*$. For each $k\geq 
1$ define $U_k = \text{span}(x_1^*,\ldots,x_k^*)$. Then due to Lemma 
\ref{technical} consider $\tilde P_k:\mathbb R_+\times \Omega \to \mathcal L(H)$ 
and $L_k:\mathbb R_+\times \Omega \to \mathcal L(H,X^*)$ such that $\tilde P_k$ 
is an orthogonal projection onto $G^*(U_k)$ and $G^*L_k = \tilde P_k$ $\mathbb 
P\times \ud s$-a.s.\ Consider a sequence $(\Psi_n)_{n\geq 1}$ of elementary 
progressive functions in $L^0(\Omega, \gamma(L^2(\mathbb R_+;H),Y))$ constructed 
thanks to \cite[Proposition 2.12]{NVW} such that $\Psi_n \to \Psi$ in  
$L^0(\Omega, \gamma(L^2(\mathbb R_+;H),Y))$.
Let $\tilde P_0:\mathbb R_+\times\Omega\to \mathcal L(H)$ be an orthogonal 
projection onto $\overline{\text{ran } (G^*)}$. Then by the ideal property 
\eqref{ideal}, a.s. we have\
\begin{align*}
 \|(G^*\Phi^*)^* -\Psi_n\tilde P_0 \|_{\gamma(L^2(\mathbb 
R_+;H),Y)}&=\|((G^*\Phi^*)^* - \Psi_n)\tilde P_0 \|_{\gamma(L^2(\mathbb 
R_+;H),Y)}\\
 &\leq \|(G^*\Phi^*)^* -\Psi_n\|_{\gamma(L^2(\mathbb R_+;H),Y)},
\end{align*}
which means that by \cite[Theorem 3.6]{NVW} $\Psi_n\tilde P_0$ is stochastically 
integrable with respect to $W_H$ and $\Psi_n\tilde P_0\cdot W_H \to 
(G^*\Phi^*)^*\cdot W_H$ in $L^0(\Omega, \gamma(L^2(\mathbb R_+;H),Y))$.
Therefore one 
can define $\widetilde{\Psi}_n:=(\tilde P_0 \Psi_n^*)^*$.

Set $\Phi_{nk}:= (L_k\tilde P_k \widetilde{\Psi}_n^*)^*$. Notice that $U =\cup_k 
U_k$ is a core of $G^*$, so $\overline {\text{ran } (G^*)} = \overline {G^*(U)} $, 
therefore $\tilde P_k \to \tilde P_0$ weakly and by \cite[Proposition~2.4]{NVW} 
$(G^*\Phi_{nk}^*)^* = (\tilde P_k \widetilde{\Psi}_n^*)^* \to 
\widetilde{\Psi}_{n}$ in $L^0(\Omega, \gamma(L^2(\mathbb R_+;H),Y))$ as $k\to 
\infty$, so one can find a subsequence $\Phi_{n} := \Phi_{nk_n}$ such that 
$(G^*\Phi_{n}^*)^*  \to \Psi$ in $L^0(\Omega, \gamma(L^2(\mathbb R_+;H),Y))$ as 
$n\to \infty$. Now fix $n\geq 1$. Since $\Psi_n$ is elementary progressive, then 
it has the following form: for each $t\geq 0$ and $\omega \in \Omega$
$$
\Psi_n (t,\omega) = \sum_{m=1}^M\sum_{l=1}^L \mathbf 1_{(t_{m-1},t_m]\times 
B_{lm}}(t,\omega)
\sum_{j=1}^Jh_{j}\otimes y_{jlm}.
$$
Hence by Remark \ref{rem:adjofelem}
$$
\Phi_n (t,\omega) = \sum_{m=1}^M\sum_{l=1}^L \mathbf 1_{(t_{m-1},t_m]\times 
B_{lm}}(t,\omega)
\sum_{j=1}^J(L_{k_n}\tilde P_{k_n}h_{j})\otimes y_{jlm}.
$$
Therefore $\Phi_n$ takes it values in a fixed finite dimensional subspace $Y_n$ 
of $Y$, and so by Remark \ref{rem:afterumdthm} one can construct simple 
approximations of $\Phi_n$ then $(1,\Phi_n)$ holds. This completes the proof.

\end{proof}

\begin{remark}
 As the reader can see, the existence of a fixed core of $G^*$ is needed only 
for (1) in Theorem \ref{thm:closedcovmain}. Without this condition one can still 
show that parts (2) and (3) are equivalent and that estimate \eqref{eq:equiv} 
holds.
\end{remark}

\subsection{General case of Brownian representation} In the preceding subsection  it 
was shown that a quite general class of cylindrical martingales with absolutely 
continuous covariation can be represented as stochastic integrals with respect to 
$W_H$, and we proved some results on stochastic integrability for them (Proposition 
\ref{stochintabscont} and Theorem \ref{theorem:inttheory}). 
Unfortunately, such results do not hold in the general case,
and it will be shown in Example \ref{ex:maincounterex} that $G$ from \eqref{theorembrownianrepresenrationeq1}
does not always exists.
The construction in this example uses two simple remarks on linear operators. 
For linear spaces $X$ and $Y$ we denote the linear space of all linear operators from $X$ to $Y$ by $L(X,Y)$,
 $L(X):=L(X,X)$.

\begin{remark}\label{rem:Hamel}
 Let $(x_{\alpha})_{\alpha \in \Lambda}\subset X$ be a Hamel (or algebraic) 
basis of $X$ (see \cite[Problem~13.4]{KF} or \cite{Hamel}). Then one can 
uniquely determine $A \in L(X,Y)$ only by its values on $(x_{\alpha})_{\alpha 
\in \Lambda}$. Indeed, for each $x$ there exist unique $N \geq 0$, 
$\alpha_1,\ldots,\alpha_N \in \Lambda$, and $c_1,\ldots,c_N\in \mathbb R$ such 
that $x = \sum_{n=1}^N c_nx_{\alpha_n}$, so one can define $Ax$ as follows: $Ax 
:=\sum_{n=1}^N c_n A x_{\alpha_n}$.
\end{remark}

\begin{remark}\label{rem:extention}
Any linear functional $\ell:X_0\to \mathbb R$ defined on a linear subspace 
$X_0$ of a Banach space $X$ can be extended linearly to $X$ using the fact that $X$ 
has a Hamel 
basis (see \cite[Part I.11]{TL}, also \cite{ArYar}). The same holds for 
operators: if $A \in  L(X_0,Y)$, where $X_0$ is a linear subspace of $X$, one 
can extend $A$ to a linear operator from $X$ to $Y$ using the Hamel basis of 
$X$. Surely this extension is not unique if $X_0\subsetneqq X$.
\end{remark}

\begin{example}\label{ex:maincounterex}
We will show that for a Hilbert space $H$ there exists $M\in \mathcal M_{\rm cyl}^{\rm loc}(H)$ which is not closed operator-Brownian representable.
 Let $(\Omega, \mathcal F, \mathbb P)$ be a probability space with a filtration 
$\mathbb F = (\mathcal F_t)_{t\geq 0}$ satisfying the usual conditions, $W: 
\mathbb R_+ \times\Omega\to \mathbb R$ be a Brownian motion. Without loss of 
generality suppose that $\mathbb F$ is generated by $W$. Let $H$ be a separable 
Hilbert space with an orthonormal basis $(h_n)_{n \geq 1}$.
Let $(\xi_n)_{n \geq 1}$ be the following sequence of random variables:
for each fixed $n \geq 1$ $\xi_n \in L^2(\Omega)$ is a measurable integer-valued 
function of
$W(2^{-n+1}) -W(2^{-n}) $ such that
$$
\mathbb P (\xi_n = k) = 2^{-n+1}\mathbf 1_{2^{n-1}\leq k < 2^n},\;\;\; k\in 
\mathbb N.
$$
It is easy to see that $(\xi_n)_{n \geq 1}$ are mutually independent
and each $\xi_n$ is $\mathcal F_{2^{-n+1}}$-measurable. For all $n\geq 1$ set 
$c_n = 2^{\frac n4}$. Consider linear functional-valued function $\ell : \mathbb 
R_+\times\Omega \to L(H,\mathbb R)$ defined as
\begin{equation}\label{strangelinfunc}
 \ell (h) = \sum_{n=1}^{\infty} c_n\langle h,h_{\xi_n}\rangle
\end{equation}
for all $h\in H$ such that $\sum_{n=0}^{\infty}|c_n\langle h,h_{\xi_n}\rangle|$ 
converges, and extended linearly to the whole $H$ thanks to Remark 
\ref{rem:extention}. Fix $h \in H$. Let $\tilde h = \sum_{n=1}^{\infty} |\langle 
h,h_n\rangle|h_n$ and $a = \sum_{n=0}^{\infty}\sum_{k=2^{n-1}}^{2^n-1}2^{-\frac 
{3n}{4}}h_k\in H$. Then
$$
\mathbb E \sum_{n=0}^{N}|c_n\langle h,h_{\xi_n}\rangle| =\sum_{n=0}^{N} 
\sum_{k=2^{n-1}}^{2^n-1} \frac {c_n|\langle 
h,h_k\rangle|}{2^{n-1}}=\sum_{n=0}^{N} \sum_{k=2^{n-1}}^{2^n-1} \frac{2^{-\frac 
{3n}{4}}|\langle h,h_k\rangle|}{2}\leq \frac 12\langle\tilde h,a\rangle.
$$
Hence by the dominated convergence theorem $\lim_{N\to\infty} 
\sum_{n=0}^{N}|c_n\langle h,h_{\xi_n}\rangle|$ exists a.s., so, for each fixed 
$h\in H$ formula \eqref{strangelinfunc} holds a.s. Consider the stochastic integral
$$
M_t(h) = \int_0^{t}\mathbf 1_{[1,2]}(s)\ell(h)dW_s,
$$
since integrand is predictable. Moreover, due to the mutual independence of 
$(\xi_n)_{n\geq 1}$
\begin{align*}
 \mathbb E\int_{\mathbb R_+}\mathbf 1_{[1,2]}(s)(\ell (h))^2ds &= \mathbb E(\ell 
(h))^2 \leq \mathbb E(\ell (\tilde h))^2
 =\mathbb E\Bigl( \sum_{n=1}^{\infty} c_n |\langle h,h_{\xi_n}\rangle| 
\Bigr)^2\\
 &=\mathbb E \sum_{n=1}^{\infty}\sum_{m=1}^{\infty} c_nc_m |\langle 
h,h_{\xi_n}\rangle||\langle h,h_{\xi_m}\rangle| \\
 &= \sum_{n=1}^{\infty} c_n^2\sum_{k=2^{n-1}}^{2^n-1} \frac{|\langle 
h,h_{k}\rangle|^2}{2^{n-1}} \\
 &+2\sum_{n\neq m}c_n c_m 
\sum_{k=2^{n-1}}^{2^n-1}\sum_{l=2^{m-1}}^{2^m-1}\frac{|\langle 
h,h_{k}\rangle||\langle h,h_{l}\rangle|}{2^{n-1}2^{m-1}}\\
 &\leq \|h\|^2 + \sum_{n,m = 
1}^{\infty}\sum_{k=2^{n-1}}^{2^n-1}\sum_{l=2^{m-1}}^{2^m-1}\frac{|\langle 
h,h_{k}\rangle||\langle h,h_{l}\rangle|}{2^{n-1}2^{m-1}}\\
 &=\|h\|^2 + \Bigl(\sum_{n=1}^{\infty}\sum_{k=2^{n-1}}^{2^n-1} \frac{c_n|\langle 
h,h_{k}\rangle|}{2^{n-1}}\Bigr)^2\\ 
 &= \|h\|^2 + \frac 14 \langle\tilde h,a\rangle^2\leq (1+\frac 14 \|a\|^2) 
\|h\|^2,
\end{align*}
so thanks to \cite[Lemma~17.10]{Kal} $M_t(h)$ is an $L^2$-martingale. But the above
computations also show that by \cite[Proposition~17.6]{Kal} $M(h)\to 0$ in 
the ucp topology as $h \to 0$, which means that $M$ is a cylindrical martingale 
as a continuous linear mapping from $H$ to $\mathcal M^{\rm loc}$.

Now our aim is to prove that $M$ is not closed operator-Brownian representable. 
Suppose that there exist an $H$-cylindrical Brownian motion $W_H:\mathbb R_+ 
\times \overline {\Omega} \to \mathbb R$ on an enlarged probability space 
$(\overline {\Omega}, \overline{\mathcal F},\overline {\mathbb P})$ with an 
enlarged filtration $\overline{\mathbb F} = (\overline{\mathcal F}_t)_{t\geq 0}$ 
(we may use the same Hilbert space $H$ since all 
separable infinite-dimensional Hilbert spaces are isometrically isomorphic) and 
a closed operator-valued $H$-strongly measurable function $G:\mathbb R_+\times 
\overline{\Omega} \to \mathcal L_{cl}(H)$ such that
$M_t(h) = \int_0^t (G^*h)^*dW_H$.

Since $M|_{[0,1]} = 0$ we can assume that $G^*|_{[0,1]} = 0$. 
Because of the structure of $M$, for each pair of vectors $h,g\in H$ there exist 
$\overline{\mathcal F}_1$-measurable $a,b\in L^0(\Omega)$ such that $a=\ell(g)$ and $b=-\ell(h)$ 
and $aM_t(h)+bM_t(g) = 0$ a.s. for all $t\geq 0$. As $a,b$ are $\overline{\mathcal 
F}_1$-measurable, then for $t\geq 1$ one can put $a,b$ under the integral: 
\begin{align*}
 0=aM_t(h)+bM_t(g) &= a\int_1^t (G^*h)^* dW_H + b\int_1^t (G^*g)^* dW_H \\
 &= \int_1^t (aG^*h+bG^*g)^* dW_H = \int_0^t (aG^*h+bG^*g)^* dW_H,
\end{align*}
and by the It\^o isometry \cite[Proposition~4.20]{DPZ} $aG^*h+bG^*g = 0$ 
$\overline{\mathbb P}\otimes \ud s$-a.s.\ This means 
that $G^*h$ 
and $G^*g$ are collinear $\overline{\mathbb P}\otimes \ud s$-a.s.\ if $a$ and $b$ are nonzero a.s.\ If for instance $a=\ell(g) = 0$ on a set of positive measure $A \in \overline{\mathcal F}_1$, then $M(g)\mathbf 1_{A} = 0$, and consequently $G^*g \mathbf 1_{A}= 0$ $\overline{\mathbb P}\otimes \ud s$-a.s., hence $G^*h$ and $G^* g$ are collinear $\overline{\mathbb P}\otimes \ud s$-a.s.

Taking an 
orthonormal basis $(h_i)_{i\geq 1}$ of $H$ 
it follows that $\overline{\mathbb P}\otimes \ud s$-a.s.
$G^*h_i$ and 
$G^*h_j$ are collinear for all $i,j$, and by the 
closability of $G^*$ one has that $\text{ran } (G^*)$ consists of one vector 
$\overline{\mathbb P}\otimes\ud s$-a.s.\ But this means that $G^*$ is a 
projection on a one-dimensional subspace, so there exist $h'_G,h''_G:\mathbb R_+ 
\times \overline{\Omega} \to H$ such that $G^*h = \langle h,h'_G \rangle h''_G$.

Since the derivative of an absolutely continuous function is defined uniquely, 
$\overline{\mathbb P}$-a.s.\ for a.e. $t\in[1,2]$
$$
[Mh_i]'_{t}=\ell(h_i)^2 = \|G^*(t)h_i\|^2 = \|h''_G(t)\|^2 \langle 
h_i,h'_G(t)\rangle^2.
$$ 
But the series of positive functions $\sum_{i=1}^{\infty} \|h''_G\|^2 \langle 
h_i,h'_G\rangle^2 = \|h'_G\|^2\|h''_G\|^2$ converges $\overline{\mathbb 
P}\otimes \ud s$-a.s., which does not hold true for 
$\sum_{i=1}^{\infty}\ell(h_i)^2$ (because linear functional $\ell$ is unbounded 
$\mathbb P\times \ud s$-a.s.\ thanks to the choice of $\{c_n\}_{n\geq 1}$).
\end{example}

\begin{remark}
 Using the previous example and \cite[Example~3.22]{VY} one can see that in 
general for a separable Hilbert space $H$ the following proper inclusions hold: 
 $$
 \mathcal M_{\rm a.c.v.}^{\rm loc}(H) \subsetneqq \mathcal M_{\rm bdd}^{\rm 
cyl}(H)\subsetneqq \mathcal M_{\rm cl}^{\rm cyl}(H)\subsetneqq \mathcal M_{\rm 
a.c.c.}^{\rm cyl}(H),
 $$
 where $\mathcal M_{\rm a.c.v.}^{\rm loc}(H)$ is the subspace of $ \mathcal M_{\rm 
loc}^{\rm cyl}(X)$ with an absolutely continuous quadratic variation (the
quadratic variation of a cylindrical continuous local martingale was defined in 
\cite{VY}), and $\mathcal M_{\rm bdd}^{\rm cyl}(H)$ is the linear space of 
cylindrical continuous local martingales with a bounded operator-generated 
covariation (considered for instance in \cite{Ond1,Ond2}).
\end{remark}

In the general case one can still represent a cylindrical martingale with an 
absolutely continuous covariation as a stochastic integral with respect to $W_H$, but then 
one has to use linear operator-valued functions instead of $\mathcal 
L_{cl}(H,X)$-valued, and some important properties are lost.

\begin{theorem}\label{theorem:mainrepresentation}
 Let $X$ be a Banach space with separable dual, and let $M \in \mathcal M_{\rm cyl}^{\rm 
loc}(X)$. Then $M$ is Brownian representable
 if and only if it is with an absolutely continuous covariation.
\end{theorem}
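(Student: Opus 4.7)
The necessity direction is immediate: if $Mx^* = g^*(x^*)\cdot W_H$ with $g^*(x^*)\in L^2_{\rm loc}(\mathbb R_+;H)$ a.s., then by the cylindrical It\^o isometry $[Mx^*,My^*]_t=\int_0^t\langle g^*(x^*),g^*(y^*)\rangle\ud s$ is absolutely continuous for all $x^*,y^*\in X^*$.

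For sufficiency, the plan is to follow the scheme of Theorem \ref{theorembrownianrepresenration}, replacing its functional-calculus construction---which by Example \ref{ex:maincounterex} need not admit a closed-operator analogue in the general case---by a measurable Gram--Schmidt argument on a countable dense subspace. Fix a countable $\mathbb Q$-linear dense subspace $V=\text{span}_{\mathbb Q}(x_n^*)_{n\geq 1}\subset X^*$. Polarizing the absolutely continuous covariations, choose jointly progressively measurable densities $\rho(x^*,y^*,\cdot,\cdot)$ of $[Mx^*,My^*]$ for $x^*,y^*\in V$; by bilinearity of covariation these form a symmetric positive semidefinite bilinear form on $V$ for $\mathbb P\times\ud s$-a.e.\ $(s,\omega)$. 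On an enlarged probability space $(\overline\Omega,\overline{\mathbb F},\overline{\mathbb P})$ carrying an independent auxiliary $\ell^2$-cylindrical Brownian motion $\wh W$, I would perform a measurable Cholesky/Gram--Schmidt factorization of the random Gram kernel $(\rho(x_n^*,x_m^*))_{n,m}$ to simultaneously construct an $H$-cylindrical Brownian motion $W_H$ (with $H=\ell^2$) and progressively measurable $g^*(x_n^*):\mathbb R_+\times\overline\Omega\to H$ satisfying $\langle g^*(x_n^*),g^*(x_m^*)\rangle=\rho(x_n^*,x_m^*)$, $\mathbb Q$-linear on $V$, and $Mx_n^*=g^*(x_n^*)\cdot W_H$ for each $n$. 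The construction proceeds inductively in Dambis--Dubins--Schwarz fashion: each $dMx_n^*$ is decomposed into its projection onto previously built directions of $W_H$ plus a new orthogonal coordinate recovered by inverting the residual against its pointwise magnitude, while rank-deficient directions of the Gram matrix are supplied by the auxiliary $\wh W$.

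To promote $g^*$ to all of $X^*$, fix $x^*\in X^*$ and choose $(y_k^*)\subset V$ with $y_k^*\to x^*$. Since $g^*$ is linear on $V$, the It\^o isometry gives
\begin{equation*}
\int_0^T\|g^*(y_k^*)-g^*(y_\ell^*)\|^2\ud s=[M(y_k^*-y_\ell^*)]_T\to 0\text{ in probability as }k,\ell\to\infty,
\end{equation*}
so $(g^*(y_k^*))$ is Cauchy in $L^0(\overline\Omega;L^2_{\rm loc}(\mathbb R_+;H))$; define $g^*(x^*)$ to be a progressively measurable representative of its limit, which by continuity of the stochastic integral satisfies $g^*(x^*)\cdot W_H=\lim_k My_k^*=Mx^*$. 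The hard part will be the Cholesky step: carrying out a measurable factorization of the random Gram kernel so as to simultaneously yield a genuine $H$-cylindrical Brownian motion $W_H$ (not merely a process with the correct covariance) while remaining linear in the generators. The technical difficulty concentrates at random $(s,\omega)$ where the rank of $(\rho_{nm})$ drops, and it is precisely there that $\wh W$ must supply the ``invisible'' directions of $W_H$.
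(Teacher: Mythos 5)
Your proposal is correct in outline but takes a genuinely different route from the paper. The paper's sufficiency argument does not build $W_H$ by Gram--Schmidt at all: it first composes $M$ with a Hilbert--Schmidt operator $A$ with dense range and trivial kernel, invokes the radonification theorem \cite[Theorem~A]{JKFR} to obtain an honest $H$-valued continuous local martingale $\widetilde M$ whose quadratic variation is absolutely continuous by Lemma \ref{lemma:abscontquad}, and then applies the classical Hilbert-space representation theorem \cite[Theorem~8.2]{DPZ} to produce $W_H$ and the integrand in one stroke. For $x^*$ outside the range of $A$ the integrand is recovered directly as the vector of Radon--Nikodym derivatives $\ud[Mx^*,W_Hx_n]/\ud s$, controlled via the Kunita--Watanabe inequality \cite[Proposition~17.9]{Kal}, and the whole family is glued into a single (generally unbounded, non-closable) linear operator using a Hamel basis as in Remark \ref{rem:Hamel}. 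Your scheme replaces the first two steps by a hands-on countable Cholesky factorization of the covariation densities; this buys self-containedness (no radonification, no appeal to the Hilbert-space theorem) at the price of having to carry out that factorization yourself.

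Two points need attention. First, the step you defer as ``the hard part'' is the entire content of the sufficiency direction and cannot remain a sketch, although it is fillable by iterating the classical finite-dimensional argument of \cite[Theorem~3.4.2]{KS}: inductively set $c_k^{(n)}:=\ud[Mx_n^*,W^{(k)}]/\ud s$ for $k<n$ (Kunita--Watanabe gives $|c_k^{(n)}|^2\le\rho(x_n^*,x_n^*)\in L^1_{\rm loc}$, so these are admissible integrands), form the residual $R_n=Mx_n^*-\sum_{k<n}c_k^{(n)}\cdot W^{(k)}$, whose density is $r_n=\rho(x_n^*,x_n^*)-\sum_{k<n}|c_k^{(n)}|^2\ge 0$, put $W^{(n)}=r_n^{-1/2}\mathbf 1_{\{r_n>0\}}\cdot R_n+\mathbf 1_{\{r_n=0\}}\cdot\wh W^{(n)}$, verify that $\mathbf 1_{\{r_n=0\}}\cdot R_n=0$ (its quadratic variation vanishes) so that $Mx_n^*$ genuinely lies in the span of $W^{(1)},\dots,W^{(n)}$, and use L\'evy's characterization to see that the pairwise-orthogonal Brownian motions $(W^{(k)})_{k\ge1}$ assemble into an $\ell^2$-cylindrical Brownian motion. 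Second, your extension from $V$ to $X^*$ by $L^0$-limits defines $g^*(x^*)$ only up to a null set depending on $x^*$, so the resulting $g$ is not pointwise linear in $x^*$; this meets the letter of the paper's definition of Brownian representability, but to obtain, as the paper does, a representation implemented by a single $L(X^*,H)$-valued process you would still need the Hamel-basis extension of Remarks \ref{rem:Hamel} and \ref{rem:extention} at the end. The necessity direction and the It\^o-isometry/density argument for passing from $V$ to $X^*$ are fine as written.
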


We will need the following lemma.

\begin{lemma}\label{lemma:abscontquad}
 Let $H$ be a separable Hilbert space, and let $M:\mathbb R_+ \times \Omega \to H$ be a 
continuous local martingale such that $\langle M,h\rangle$ has an absolutely 
continuous variation for all $h \in H$ a.s. Then $[M]$ also has an absolutely 
continuous version. Moreover, there exists scalarly measurable positive 
Hilbert-Schmidt operator-valued $\Phi:\mathbb R_+\times \Omega \to \mathcal 
L(H)$ such that a.s.\ $[\langle M,h\rangle,\langle M,g\rangle] = 
\int_0^{\cdot}\langle \Phi h,\Phi g\rangle ds$.
\end{lemma}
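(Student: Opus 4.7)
The plan is to build $\Phi$ as the positive operator square root of the density of the covariation form of $M$.

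First, fix an orthonormal basis $(h_n)_{n\geq 1}$ of $H$. By hypothesis each $[\langle M,h_n\rangle]$ has a density $f_n\geq 0$. Since $[M]=\sum_n[\langle M,h_n\rangle]$ a.s.\ (as recalled in the preliminaries), the monotone convergence theorem gives $[M]_t=\int_0^t\sum_n f_n\,\ud s$, so $[M]$ has an absolutely continuous version with density $v:=\sum_n f_n$, finite $\mathbb P\times\ud s$-a.e.\ since $[M]_t<\infty$ a.s. This already settles the first assertion.

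Next, by polarization $[\langle M,h\rangle,\langle M,g\rangle]$ is absolutely continuous for every $h,g\in H$; denote its density by $q(h,g)$. Then $q$ is symmetric and positive semi-definite with $q(h_n,h_n)=f_n$, so $\sum_n q(h_n,h_n)=v<\infty$ $\mathbb P\times\ud s$-a.e. The Cauchy--Schwarz inequality $|q(h,g)|\leq q(h,h)^{1/2}q(g,g)^{1/2}$ then shows that $q_t(\omega)$ extends uniquely to a positive trace-class operator on $H$ with trace $v$. Setting $\Phi_t(\omega):=q_t(\omega)^{1/2}$ (the unique positive square root) produces a Hilbert--Schmidt operator with $\|\Phi\|_{\gamma(H,H)}^2=\mathrm{tr}\,q=v$ satisfying $\langle\Phi h,\Phi g\rangle=\langle qh,g\rangle$, which is the required identity.

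For scalar measurability, the matrix entries $q_{mn}(t,\omega):=q_t(\omega)(h_m,h_n)$ are progressively measurable as Radon--Nikodym derivatives of continuous adapted processes, applying \cite[Lemma~3.10]{VY} to polarizations of $[\langle M,h_m\pm h_n\rangle]$. Scalar measurability of $\Phi$ then follows by approximating $\sqrt{\cdot}$ uniformly on $[0,N]$ by polynomials $p_k$: on the measurable set $\{\mathrm{tr}\,q\leq N\}$ one has $p_k(q)\to\Phi$ in operator norm, and each $p_k(q)$ has matrix entries depending polynomially on the measurable array $(q_{mn})$; letting $N\to\infty$ gives scalar measurability everywhere.

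The main obstacle is cleanly collapsing the hypothesis -- which gives absolute continuity of $[\langle M,h\rangle]$ only on an $h$-dependent null set -- into a single full-measure event on which $q$ is defined and behaves as a trace-class form for all $h,g\in H$ simultaneously. This is handled by fixing the null set generated by the countable basis $(h_n)$, defining $q$ on its complement via $(f_n)$ and polarization, and using the Cauchy--Schwarz bound together with finiteness of the trace to extend continuously in $h,g$.
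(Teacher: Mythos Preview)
Your argument is correct. For the first assertion you and the paper do exactly the same thing: write $[M]=\sum_n[\langle M,h_n\rangle]$ along a basis, take densities $f_n$, and sum (you invoke monotone convergence, the paper says dominated convergence; for nonnegative summands yours is the cleaner choice).

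For the second assertion the paper simply writes ``an easy consequence of \cite[Theorem~14.3(2)]{MP}'' and moves on, whereas you supply a self-contained construction: build the trace-class density $q$ from the covariation, set $\Phi=q^{1/2}$, and verify scalar measurability by polynomial approximation of the square root on $\{\mathrm{tr}\,q\le N\}$. This is the standard unpacking of what the M\'etivier--Pellaumail reference provides, so the two routes are not really different in spirit; yours is just explicit where the paper is terse. The gain is that your proof does not require the reader to chase the citation; the cost is a bit more bookkeeping, particularly the null-set consolidation in your last paragraph, which the cited result absorbs. One minor point worth making precise: when you say the matrix entries of $p_k(q)$ ``depend polynomially'' on $(q_{mn})$, note that the entries of $q^j$ are infinite sums over intermediate indices, so measurability comes from countable limits rather than literal polynomial expressions---this is harmless but should be stated that way.
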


\begin{proof}\label{[M]isabscont}
Since $H$ is a separable Hilbert space one sees that
\begin{equation}\label{[M]isabscont1}
 [M]_t = \sum_{n=1}^{\infty}[\langle M,e_n\rangle]_t\;\;\; a.s.\; \forall t\geq 
0
\end{equation}
for any given orthonormal basis $(e_n)_{n\geq 1}$ of $H$. Let $f_n:\mathbb 
R_+\times\Omega \to \mathbb R_+$ be such that $[\langle M,e_n\rangle]_t = 
\int_0^t f_n(s) ds$ a.s. $\forall n\geq 1, t\geq 0$. Then thanks to 
\eqref{[M]isabscont1} and the dominated convergence theorem $\sum_{n=1}^{\infty} 
f_n$ converges in $L^1_{\rm loc}(\mathbb R_+)$ a.e. Let $f := 
\sum_{n=1}^{\infty} f_n$. Then $[M]_t = \int_0^t f(s)ds$ a.s. for all $t\geq 0$, 
which means that $[M]_t$ is absolutely continuous a.s.

The second part is an easy consequence of \cite[Theorem~14.3(2)]{MP}.
\end{proof}

\begin{proof}[Proof of Theorem \ref{theorem:mainrepresentation}]
 One direction is obvious. Now let $M$ be with an absolutely continuous 
covariation. First suppose that $X$ is a Hilbert space. Consider a 
Hilbert-Schmidt operator $A$ with zero kernel and dense range. For instance set 
$Ah_n = \frac{1}{n}h_n$ for some orthonormal basis $(h_n)_{n\geq 1}$ of $X$. 
Then according to \cite[Theorem~A]{JKFR} $M(A(\cdot))$ admits a local martingale 
version, namely there exists a continuous local martingale 
$\widetilde{M}:\mathbb R_+\times \Omega \to X$ such that
 $M(Ax)$ and $\langle \widetilde M,x\rangle$ are indistinguishable for each $x 
\in X$. Notice that $\widetilde M$ has an absolutely continuous quadratic 
variation by Lemma \ref{lemma:abscontquad}. Also by \cite[Theorem~8.2]{DPZ} 
there exists an enlarged probability space $(\overline{\Omega}, 
\overline{\mathcal F}, \overline{\mathbb P})$ with an enlarged filtration 
$\overline{\mathbb F} = (\overline{\mathcal F}_t)_{t\geq 0}$ such that there 
exist an $X$-cylindrical Brownian motion $W_X:\mathbb R_+\times\overline{\Omega} 
\times X \to \mathbb R$ and $X$-strongly progressively measurable $\Phi:\mathbb 
R_+\times \overline{\Omega} \to \mathcal L(X)$ such that $\widetilde M = 
\int_0^{\cdot} \Phi^* dW_X$.

 Now fix $h \notin \text{ran } (A)$. Let $(x_n)_{n\geq 1}$ be a $\mathbb 
Q-\text{span}$ of $(h_n)_{n\geq1}$. Denote by $f_h, f_{nh}$ the derivatives of 
$[Mh]$ and $[Mh,W_Xx_n]$ in time respectively. For each $n\geq 1$ and for each 
segment $I\subset\mathbb R_+$ $|\mu_{[Mh,W_Xx_n]}|(I)\leq 
\mu_{[Mh]}^{1/2}(I)\mu_{[W_Xx_n]}^{1/2}(I)$ a.s.\ by \cite[Proposition 
17.9]{Kal} So, according to \cite[Theorem~5.8.8]{Bog}, $|f_{nh}(t)|\leq 
(f_h(t))^{\frac 12}\|x_n\|$ a.s. for almost all $t\geq 0$. One can modify 
$f_{nh}$ on $(x_n)_{n\geq 1}$ in a linear way, so it defines a bounded linear 
functional on $\text{span }(h_n)_{n\geq 1}$. Therefore there exists scalarly 
progressively measurable $a_h:\mathbb R_+\times\Omega \to X$ such that $f_{nh} = 
\langle a_h,x_n\rangle$ a.s. for almost all $t\geq 0$.

 Now consider $N = \int_0^{\cdot} a_h^*dW_X$. Then $[Mh,W_Xg]=[N,W_Xg]$ for all 
$g\in H$, and consequently $[Mh, \Phi\cdot W_H]=[N,\Phi\cdot W_H]$ for each 
$\overline{\mathbb F}$-progressively measurable stochastically integrable 
$\Phi:\mathbb R_+\times \Omega \to H$, and so $[Mh, Mg]=[N,Mg]$ for all $g\in 
\text{ran }(A)$ and $[Mh,N]=[N]$. Let $(g_n)_{n\geq 1}\subset \text{ran } (A)$ be 
such that $g_n\to h$. Then $[Mh-Mg_n]\to 0$ in ucp, and so
 \begin{align*}
 [N-Mg_n] &= [N]+[Mg_n]-2[N,Mg_n] \\
 &=([N]-[N,Mg_n])+([Mg_n]-[Mh,Mg_n])\\
 &\to ([N]-[N,Mh]) + ([Mh]-[Mh]) = 0
 \end{align*}
 in ucp, and therefore $N$ and $Mh$ are indistinguishable.

 For a general Banach space $X$, define a Hilbert space $H$ and 
a dense embedding $j:X\hookrightarrow H$ as in 
\cite[p.154]{Kuo}. Let $(h_{\alpha})_{\alpha \in \Lambda}$ be a Hamel basis of 
$H$ and $(x^*_{\beta})_{\beta \in \Delta}\cup (h_{\alpha})_{\alpha \in \Lambda}$ 
be a Hamel basis of $X^*$ (thanks to the embedding $H\hookrightarrow X^*$ and 
\cite[Theorem 1.4.5]{Ed}). Let $W_H$ be a Brownian motion constructed as above 
for $M|_{H}\in \mathcal M^{\rm cyl}_{\rm loc}(H)$, i.e. for each $\alpha \in 
\Lambda$ there exists progressively measurable $a_{h_{\alpha}}:\mathbb R_+ 
\times \Omega \to H$ such that $Mh_{\alpha}$ and $\int_0^{\cdot}a_{h_{\alpha}}^* 
\ud W_H$ are indistinguishable. Using the same technique and the fact that 
$j^*:H\hookrightarrow X^*$ is a dense embedding, for each $\beta \in \Delta$ one 
can define progressively measurable $a_{x^*_{\beta}}:\mathbb R_+ \times \Omega 
\to H$ such that $Mx^*_{\beta}$ and $\int_0^{\cdot}a_{x^*_{\beta}}^* \ud W_H$ 
are indistinguishable. Using to Remark \ref{rem:Hamel}, we can now define an $X^*$-
strongly progressively measurable operator-valued function $F:\mathbb R_+ \times 
\Omega \to L(X^*,H)$ such that for each $x^* \in X^*$ a.s.\ $Mx^* = 
\int_0^{\cdot}(Fx^*)^*\ud W_H$.
\end{proof}

The next theorem is an obvious corollary of \cite[Theorem~3.6]{NVW} and 
\cite[Theorem~5.13]{NVW}.

\begin{theorem}
 Let $X$ be a UMD Banach space, $H$ be a Hilbert space, $W_H:\mathbb R_+\times 
\Omega\times H \to \mathbb R$ be an $H$-cylindrical Brownian motion, $M\in 
\mathcal M_{\rm cyl}^{\rm loc}(X)$, and $\Phi:\mathbb R_+\times \Omega \to L(X^*,H)$ 
be scalarly predictable measurable with respect to filtration $\mathbb F_{W_H}$ 
generated by $W_H$ such that $M = \int_0^{\cdot} \Phi \ud W_H$. Then there 
exists an $X$-valued continuous local martingale $\widetilde M:\mathbb 
R_+\times\Omega \to X$ such that $Mx^* = \langle\widetilde M,x^*\rangle$ for 
each $x^* \in X^*$ if and only if $\Phi\in \mathcal L(X^*,H)$ $\mathbb P \times 
\ud s$-a.s.\ and $\Phi^* \in \gamma(L^2(\mathbb R_+;H),X)$ a.s.
\end{theorem}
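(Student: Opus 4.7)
The plan is to deduce both directions directly from the two quoted results of \cite{NVW}: the characterization of stochastic integrability in UMD spaces (Theorem 3.6) and the Brownian representation theorem for UMD-valued local martingales (Theorem 5.13). The essential content is to match the two operator-valued integrands on the scalar level and invoke uniqueness.

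For the implication $(\Leftarrow)$, assume $\Phi \in \mathcal L(X^*,H)$ $\mathbb P\times\ud s$-a.s.\ and $\Phi^* \in \gamma(L^2(\mathbb R_+;H),X)$ a.s. Since $X$ is UMD, \cite[Theorem~3.6]{NVW} implies that $\Phi^*$ is stochastically integrable with respect to $W_H$ as an $X$-valued process, so that
$$
\widetilde M := \int_0^{\cdot}\Phi^*\ud W_H
$$
is a well-defined $X$-valued continuous local martingale. For every $x^*\in X^*$ one then has $\langle \widetilde M, x^*\rangle = \int_0^{\cdot}\langle \Phi^*\cdot,x^*\rangle\ud W_H = \int_0^{\cdot}\Phi x^*\ud W_H = Mx^*$ as scalar continuous local martingales, which is the desired conclusion.

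For the implication $(\Rightarrow)$, suppose $\widetilde M$ exists. Since $\widetilde M$ is $X$-valued with $X$ UMD, and since $\langle\widetilde M,x^*\rangle = Mx^*$ is adapted to $\mathbb F_{W_H}$ for every $x^*\in X^*$, $\widetilde M$ itself is adapted to $\mathbb F_{W_H}$ (by Pettis measurability together with separability of $X^*$, so of $X$). After a standard localization we may apply \cite[Theorem~5.13]{NVW}, which yields a scalarly predictable process $\Psi:\mathbb R_+\times\Omega\to L(X^*,H)$ with $\Psi\in \mathcal L(X^*,H)$ $\mathbb P\times\ud s$-a.s., $\Psi^*\in \gamma(L^2(\mathbb R_+;H),X)$ a.s., and $\widetilde M = \int_0^\cdot \Psi^*\ud W_H$.

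It remains to identify $\Psi$ with $\Phi$. For every $x^*\in X^*$, both $Mx^* = \int_0^\cdot \Phi x^*\ud W_H$ and $\langle\widetilde M,x^*\rangle = \int_0^\cdot \Psi x^*\ud W_H$ agree as scalar continuous local martingales. The It\^o isometry \cite[Proposition~4.20]{DPZ} then gives $\Phi x^* = \Psi x^*$ in $L^0(\Omega;L^2_{\rm loc}(\mathbb R_+;H))$, hence $\mathbb P\times \ud s$-a.s.; applying this to a countable dense subset of $X^*$ (which exists since $X$ is separable as a consequence of UMD and the hypothesis that $X^*$ supports a scalarly predictable $\Phi$ giving a well-defined cylindrical martingale) and using density shows that $\Phi = \Psi$ as elements of $L(X^*,H)$ $\mathbb P\times \ud s$-a.s., so $\Phi$ inherits both integrability properties of $\Psi$. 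The only mild subtlety, and the one place where care is needed, is this pointwise-in-$x^*$ to uniform-in-$x^*$ passage: it uses the separability of a dense subspace of $X^*$ together with the fact that the equality $\Phi x^* = \Psi x^*$ is linear in $x^*$, so the exceptional null set can be chosen uniformly over a dense countable subset and then extended by linearity to all of $X^*$ using Remark \ref{rem:Hamel} if necessary.
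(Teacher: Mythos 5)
Your proof is correct and takes exactly the route the paper intends: the paper offers no argument beyond declaring the theorem ``an obvious corollary of [Theorem~3.6, NVW] and [Theorem~5.13, NVW]'', and your two directions are precisely the natural applications of those results (Theorem 3.6 for sufficiency, Theorem 5.13 plus the It\^o isometry for necessity). The only wobble is the closing Hamel-basis remark: linearity cannot propagate the identity $\Phi x^*=\Psi x^*$ from the span of a countable dense set to all of $X^*$ pointwise in $(t,\omega)$, but this is harmless because the hypothesis $Mx^*=\langle\widetilde M,x^*\rangle$ for \emph{every} $x^*$ already yields $\Phi x^*=\Psi x^*$ $\mathbb P\times\ud s$-a.e.\ for each individual $x^*\in X^*$, which is the sense in which the conclusion about $\Phi$ (that it agrees a.e.\ with a bounded-operator-valued $\Psi$ whose adjoint is $\gamma$-radonifying) should be read.
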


\subsection{Time-change} A family $\tau = (\tau_s)_{s\geq 0}$ of finite stopping 
times is called a {\it finite random time change} if it is non-decreasing and 
right-continuous. If $\mathbb F$ is right-continuous, then
by to \cite[Lemma~7.3]{Kal} the {\it induced 
filtration} $\mathbb G = (\mathcal G_s)_{s \geq 0} = (\mathcal 
F_{\tau_s})_{s\geq 0}$ (see \cite[Chapter~7]{Kal}) is right-continuous as well.
$M\in\mathcal M_{\rm cyl}^{\rm loc}(X)$ is said to be {\it $\tau$-continuous} if 
a.s.\ for each $x^*\in X^*$, $Mx^*$ (and, thanks to \cite[Problem 17.3]{Kal} 
equivalently $[Mx^*]$) is a constant on every interval $[\tau_{s-}, \tau_s]$, $s 
\geq 0$, where we set $\tau_{0-} = 0$.

\begin{remark}\label{rem:timechangecyl}
 Note that if $M\in \mathcal M_{\rm cyl}^{\rm loc}(X)$ is $\tau$-continuous for 
a given time-change~$\tau$, then $M\circ \tau \in \mathcal M_{\rm cyl}^{\rm 
loc}(X)$. Indeed, for each given $x^* \in X^*$ one concludes thanks to 
\cite[Proposition 17.24]{Kal} $Mx^* \circ \tau$ is a continuous local 
martingale. Also for a given vanishing sequence $(x_n^*)_{n\geq 1}\subset X^*$ 
one can easily prove that $Mx_n^* \circ \tau \to 0$ in the ucp topology by using 
the stopping time argument and the fact that $Mx_n^* \to 0$ in the ucp topology 
by the definition of $\mathcal M_{\rm cyl}^{\rm loc}(X)$.
\end{remark}

The following natural question arise: does there exist a suitable time-change making a 
given $M\in \mathcal M_{\rm cyl}^{\rm loc}(X)$ Brownian representable? The answer is given in the following theorem.

\begin{theorem}\label{thm:existstimechangforabscont}
Let $X$ be a Banach space with a separable dual space, $M \in \mathcal M_{\rm 
cyl}^{\rm loc}(X)$. Then there exists a time-change $(\tau_s)_{s\geq 0}$ such 
that $M\circ \tau$ is with an absolutely continuous covariation, i.e. Brownian 
representable.
\end{theorem}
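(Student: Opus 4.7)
My plan is to mimic the Dambis--Dubins--Schwarz construction, building a single continuous strictly increasing adapted process $A$ that simultaneously dominates the Lebesgue--Stieltjes measures $\mu_{[Mx^{*}]}$ of all the scalar quadratic variations, and then to take $\tau$ as its inverse. Using the separability of $X^{*}$, I would fix a countable dense set $(x_{n}^{*})_{n \ge 1} \subset X^{*}$ and set
\begin{equation*}
A_{t} := t + \sum_{n=1}^{\infty} 2^{-n}\arctan [Mx_{n}^{*}]_{t}.
\end{equation*}
The bound $\arctan\le\pi/2$ makes the series uniformly convergent, so $A$ is $\mathbb F$-adapted, a.s.\ continuous, strictly increasing thanks to the $+t$ summand, with $A_{0}=0$ and $A_{\infty}=\infty$. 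Then $\tau_{s}:=\inf\{t\ge 0:A_{t}>s\}$ is an a.s.\ continuous finite random time change; continuity means $\tau_{s-}=\tau_{s}$, so the intervals $[\tau_{s-},\tau_{s}]$ in the definition of $\tau$-continuity collapse to points and $M\circ\tau\in\mathcal M_{\rm cyl}^{\rm loc}(X)$ by Remark \ref{rem:timechangecyl}. Assuming for a moment that $\mu_{[Mx^{*}]}\ll\mu_{A}$ holds a.s.\ for every fixed $x^{*}\in X^{*}$, writing $f=\ud\mu_{[Mx^{*}]}/\ud\mu_{A}$ and performing the substitution $u=\tau_{r}$, $\ud A_{u}=\ud r$, gives
\begin{equation*}
 [Mx^{*}\circ\tau]_{s}=[Mx^{*}]_{\tau_{s}}=\int_{0}^{\tau_{s}} f \,\ud A=\int_{0}^{s} f(\tau_{r})\,\ud r,
\end{equation*}
so $M\circ\tau$ has absolutely continuous covariation and Theorem \ref{theorem:mainrepresentation} delivers the Brownian representation.

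The easy half is $\mu_{[Mx_{n}^{*}]}\ll\mu_{A}$: by construction $\ud A_{t}\ge 2^{-n}(1+[Mx_{n}^{*}]_{t}^{2})^{-1}\,\ud[Mx_{n}^{*}]_{t}$, so $\ud[Mx_{n}^{*}]_{t}\le 2^{n}(1+[Mx_{n}^{*}]_{t}^{2})\,\ud A_{t}$, and every $\mu_{A}$-null Borel set is $\mu_{[Mx_{n}^{*}]}$-null.

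The main obstacle will be extending this absolute continuity from the countable dense family to a general $x^{*}\in X^{*}$, since a pointwise density estimate is no longer available. My approach: pick $x^{*}_{n_{k}}\to x^{*}$ in $X^{*}$; continuity of $M:X^{*}\to\mathcal M^{\rm loc}$ gives $M(x^{*}-x^{*}_{n_{k}})\to 0$ in ucp, so $[M(x^{*}-x^{*}_{n_{k}})]_{T}\to 0$ in probability for every $T$, and a diagonal subsequence makes this convergence almost sure for every $T\in\mathbb N$. On the a.s.\ event on which in addition $\mu_{[Mx^{*}_{n_{k}}]}\ll\mu_{A}$ for every $k$, the Kunita--Watanabe inequality $|\mu_{[M_{1},M_{2}]}|\le\mu_{[M_{1}]}^{1/2}\mu_{[M_{2}]}^{1/2}$ from \cite[Proposition~17.9]{Kal} applied to $M_{1}=M(x^{*}-x^{*}_{n_{k}})$, $M_{2}=Mx^{*}_{n_{k}}$ yields, via $[M_{1}+M_{2}]=[M_{1}]+2[M_{1},M_{2}]+[M_{2}]$,
\begin{equation*}
\mu_{[Mx^{*}]}(E)^{1/2}\le \mu_{[M(x^{*}-x^{*}_{n_{k}})]}(E)^{1/2}+\mu_{[Mx^{*}_{n_{k}}]}(E)^{1/2}.
\end{equation*}
For any Borel $E\subset[0,T]$ with $\mu_{A}(E)=0$ the second term vanishes and the first is dominated by $[M(x^{*}-x^{*}_{n_{k}})]_{T}^{1/2}\to 0$, so $\mu_{[Mx^{*}]}(E)=0$, which gives $\mu_{[Mx^{*}]}\ll\mu_{A}$ on the chosen a.s.\ event and completes the argument.
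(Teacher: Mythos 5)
Your proposal is correct, and its overall skeleton matches the paper's: a countable dense family $(x_n^*)$, a dominating continuous increasing adapted process built from $\arctan[Mx_n^*]$ (the paper takes $F(t)=t(1+\sum_n 2^{-n}\arctan[Mx_n^*]_t)$, you take the additive variant $t+\sum_n 2^{-n}\arctan[Mx_n^*]_t$; both dominate every $\mu_{[Mx_n^*]}$ and have Lebesgue image measure after the inverse time change), followed by an appeal to Theorem \ref{theorem:mainrepresentation}. Where you genuinely diverge is the key step of upgrading $\mu_{[Mx_n^*]}\ll\mu_F$ from the dense family to all of $X^*$: the paper invokes Lemma \ref{lemma:appB1}, whose proof is rather heavy (a further time change, a joint Brownian representation of the whole sequence via Lemma \ref{lemma:appB2}, and the It\^o isometry), whereas you use the Kunita--Watanabe inequality in its measure form to get the triangle inequality $\mu_{[M_1+M_2]}(E)^{1/2}\le\mu_{[M_1]}(E)^{1/2}+\mu_{[M_2]}(E)^{1/2}$ for all Borel $E$ on a single a.s.\ event, applied to the decomposition $Mx^*=M(x^*-x^*_{n_k})+Mx^*_{n_k}$ together with a.s.\ convergence of $[M(x^*-x^*_{n_k})]_T$ along a subsequence. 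This is more elementary and self-contained; in fact the same argument proves Lemma \ref{lemma:appB1} itself, since $M_n\to M$ in ucp gives $[M_n-M]_T\to 0$ in probability by \cite[Proposition 17.6]{Kal}, so your route would let one bypass Lemma \ref{lemma:appB2} entirely for this purpose (the paper's lemma, of course, is also used elsewhere, e.g.\ in Theorem \ref{theorem:inttheory}). The only point to be explicit about is that \cite[Proposition 17.9]{Kal} must be used in the form ``a.s., for \emph{all} Borel sets $E$, $|\mu_{[M_1,M_2]}|(E)\le\mu_{[M_1]}(E)^{1/2}\mu_{[M_2]}(E)^{1/2}$,'' not merely for intervals; this is standard and consistent with how the paper itself uses that proposition.
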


\begin{proof}
Let $(x_n^*)_{n\geq 1}\subset X^*$ be a dense subset of the unit ball of $X^*$. 
Consider the  increasing predictable 
process $F:\mathbb R_+ \times \Omega \to \mathbb R_+$ given by $F(t) = 
t(1+\sum_{n=1}^{\infty} \frac {1}{2^n}\arctan ([Mx_n^*]_t))$, and consider the time-change
 $\tau_s = \inf\{t\geq 0:F_t>s\}$ for $s\geq 0$. This time-change is 
finite since $\lim_{t\to \infty}F(t) = \infty$.

For each fixed $\omega\in \Omega$ and $n\geq 1$ one has $\mu_{[Mx_n^*]}\ll 
\mu_F$. Then by Lemma \ref{lemma:appB1} one sees that $\mu_{[Mx^*]}\ll \mu_F$ 
a.s.\ for each $x^*\in X^*$, so $M$ is $\tau$-continuous. Moreover, for each 
$x^* \in X^*$ a.s. one has $\mu_{[Mx^*\circ \tau]} =\mu_{[Mx^*]\circ \tau}\ll 
\mu_{F\circ \tau}$, where the last measure is a Lebesgue measure on $\mathbb 
R_+$, so by Remark \ref{rem:timechangecyl} $M\circ \tau \in\mathcal M_{\rm 
cyl}^{\rm loc}(X)$ is with an absolutely continuous covariation.
\end{proof}

Let $X$ be a separable Banach space, and let $\widetilde{M}\in \mathcal M^{\rm loc}(X)$. 
Then $\widetilde{M}$ is {\it weakly Brownian representable} if there exist a 
Hilbert space $H$, an $H$-cylindrical Brownian motion $W_H$ and a function $G:\mathbb 
R_+\times\Omega\to L(X^*,H)$ such that for each $x^* \in X^*$ the function 
$Gx^*$ is stochastically integrable w.r.t. $W_H$ and $\langle \widetilde 
M,x^*\rangle = \int_0^{\cdot} Gx^*\ud W_H$ a.s.\ Thanks to \cite[Part 3.3]{VY} 
there exists an associated cylindrical continuous local martingale $M \in 
\mathcal M_{\rm cyl}^{\rm loc}(X)$, so the following corollary of Theorem 
\ref{thm:existstimechangforabscont} holds.

\begin{corollary}
Let $X$ be a Banach space with a separable dual space, and let $\widetilde {M}:\mathbb 
R_+\times \Omega \to X$ be a continuous local martingale. Then there exists a 
time-change $(\tau_s)_{s\geq 0}$ such that $\widetilde M\circ \tau$ is weakly 
Brownian representable.
\end{corollary}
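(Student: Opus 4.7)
The plan is to reduce the corollary to Theorem \ref{thm:existstimechangforabscont} by passing through the cylindrical martingale canonically associated with $\widetilde M$. Concretely, since $\widetilde M\in \mathcal M^{\rm loc}(X)$, one defines $M:X^*\to \mathcal M^{\rm loc}$ by $Mx^* := \langle \widetilde M, x^*\rangle$. By the discussion preceding the corollary (referring to \cite[Part 3.3]{VY}), $M\in\mathcal M_{\rm cyl}^{\rm loc}(X)$; continuity of the linear map is immediate from the fact that weak convergence $x_n^*\to 0$ in $X^*$ combined with the boundedness of sample paths of $\widetilde M$ on compacts forces $Mx_n^*\to 0$ in ucp.

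Next I would apply Theorem \ref{thm:existstimechangforabscont} to this cylindrical $M$: it yields a finite random time-change $\tau = (\tau_s)_{s\geq 0}$ such that $M\circ\tau \in \mathcal M_{\rm cyl}^{\rm loc}(X)$ has an absolutely continuous covariation, and hence by Theorem \ref{theorem:mainrepresentation} is Brownian representable. That is, there exist a separable Hilbert space $H$, an $H$-cylindrical Brownian motion $W_H$ on a suitably enlarged probability space, and $G:\mathbb R_+\times\Omega\to L(X^*,H)$ such that for every $x^*\in X^*$,
\begin{equation*}
M\circ\tau(x^*) = \int_0^{\cdot} Gx^*\ud W_H \quad \text{a.s.}
\end{equation*}

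The key identification is then $M\circ\tau(x^*)(t) = Mx^*(\tau_t) = \langle \widetilde M(\tau_t), x^*\rangle$, which will show that $\widetilde M\circ \tau$ satisfies the definition of weak Brownian representation once we verify that $\widetilde M\circ\tau$ is itself an $X$-valued continuous local martingale. The main (but mild) obstacle here is continuity: $\tau$ is only right-continuous, so a priori $\widetilde M\circ \tau$ is just càdlàg. However, the construction of $\tau$ in Theorem \ref{thm:existstimechangforabscont} guarantees $\tau$-continuity of $M$, meaning $Mx^*$ is a.s.\ constant on each interval $[\tau_{s-},\tau_s]$ for every $x^*\in X^*$. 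Since $X^*$ separates points of $X$ and $\widetilde M$ has continuous paths, this forces $\widetilde M(\tau_{s-})=\widetilde M(\tau_s)$ a.s., so $\widetilde M\circ\tau$ is continuous; that it remains a local martingale follows from \cite[Proposition 17.24]{Kal} applied coordinatewise combined with the Pettis measurability of $\widetilde M$ (which holds since $X^*$ is separable, hence $X$ is as well up to restriction to the separable closed linear span of the range).

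Putting the pieces together, $\widetilde M\circ\tau$ is an $X$-valued continuous local martingale with $\langle \widetilde M\circ\tau, x^*\rangle = \int_0^\cdot Gx^*\ud W_H$ for every $x^* \in X^*$, which is exactly weak Brownian representability. The whole argument is thus essentially a transfer statement, and the only nontrivial point is ensuring the time-changed process stays in $\mathcal M^{\rm loc}(X)$, which is handled by the $\tau$-continuity built into the proof of Theorem \ref{thm:existstimechangforabscont}.
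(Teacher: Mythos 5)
Your proposal is correct and follows essentially the same route as the paper, which simply passes to the associated cylindrical martingale $Mx^*=\langle\widetilde M,x^*\rangle$ (via \cite[Part 3.3]{VY}) and invokes Theorem \ref{thm:existstimechangforabscont} together with Theorem \ref{theorem:mainrepresentation}; you merely make explicit the details the paper leaves implicit (continuity and the local martingale property of $\widetilde M\circ\tau$, which in fact come for free here since the $F$ used in the proof of Theorem \ref{thm:existstimechangforabscont} is strictly increasing and continuous, so $\tau$ has no jumps). One cosmetic point: continuity of $x^*\mapsto Mx^*$ should be verified for norm convergence $x_n^*\to 0$ (the bound $\sup_{t\le T}|\langle\widetilde M_t,x_n^*\rangle|\le\|x_n^*\|\sup_{t\le T}\|\widetilde M_t\|$ suffices), rather than the weak convergence you mention.
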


\begin{remark}
 Unfortunately we do not see a way to prove an analogue of Theorem 
\ref{theorem:inttheory}-\ref{thm:closedcovmain} in the present general case even for 
an $X^*$-valued integrand. The main difficulty is the discontinuity of the 
corresponding operator-valued function. One of course can prove such an analogue 
for integrands with values in a given finite-dimensional subspace of $X^*$, 
but this would amount to a stochastic integral with respect to an $\mathbb 
R^d$-valued continuous local martingale for some $d\geq 1$; the theory for this 
has been developed by classical works such as e.g. \cite{MP}.
\end{remark}

\appendix

\section{Technical lemmas on measurable closed operator-valued functions}\

The following lemma shows that a Borel bounded function of a closed 
operator-valued scalarly measurable function is again an operator-valued 
scalarly measurable function.

\begin{lemma}\label{Borelcalculus1}
 Let $(S,\Sigma)$ be a measurable space, $H$ be a separable Hilbert space, and  $f:S 
\to \mathcal L_{cl}(H)$
 be such that $(h_i)_{i=1}^{\infty}\subset \text{dom }(f^*(s))$ for each $s\in S$ 
and $(f^*h_i)_{i\geq 1}$ are measurable for some fixed orthonormal basis
 $(h_i)_{i\geq 1}$ of $H$. Let $g:\mathbb R \to \mathbb R$ be finite Borel 
measurable.
 Then $g(f^*f):S \to \mathcal L(H)$ is well-defined and scalarly measurable,
 $\|g(f^*f)(s)\|\leq \|g\|_{L^{\infty}(\mathbb R)}$ for each $s \in S$.
\end{lemma}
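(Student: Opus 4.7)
For each fixed $s\in S$, the operator $f(s)$ is closed and densely defined on $H$, so by von Neumann's theorem $f^*(s)f(s)$ is nonnegative self-adjoint. The spectral theorem and its associated Borel functional calculus then give, for every bounded Borel $g:\mathbb R\to\mathbb R$, a bounded operator $g(f^*f)(s)\in\mathcal L(H)$ with $\|g(f^*f)(s)\|\leq\sup_{\lambda\in\sigma(f^*f(s))}|g(\lambda)|\leq\|g\|_{L^\infty(\mathbb R)}$. This settles well-definedness and the norm bound.

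For scalar measurability I would run a monotone class argument. Let $\mathcal G$ denote the class of bounded Borel $g:\mathbb R\to\mathbb R$ such that $s\mapsto\langle g(f^*f(s))h,k\rangle$ is measurable for all $h,k\in H$. Dominated convergence applied to the scalar spectral measures shows $\mathcal G$ is closed under uniformly bounded pointwise limits, and a basis expansion $\langle g_1(A)g_2(A)h,k\rangle=\sum_j\langle g_2(A)h,e_j\rangle\langle g_1(A)e_j,k\rangle$ gives closure under products. Since the resolvents $r_c(t)=(c+t)^{-1}$, $c>0$, together with the constants generate $C_b([0,\infty))$ as a subalgebra by Stone--Weierstrass, and $C_b$ generates the bounded Borel class under bounded pointwise limits by the monotone class theorem, it suffices to show $r_c\in\mathcal G$ for every $c>0$.

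For $r_c$ I would exploit the hypothesis's direct measurable access to $f^*$ on $(h_i)$ via a Galerkin scheme. Setting $H_n:=\text{span}(h_1,\ldots,h_n)\subset\text{dom}(f^*(s))$, define $u_n(s)\in H_n$ as the unique solution to $c\langle u_n(s),v\rangle+\langle f^*(s)u_n(s),f^*(s)v\rangle=\langle h,v\rangle$ for all $v\in H_n$. Since the Gram matrix $(c\delta_{ij}+\langle f^*(s)h_i,f^*(s)h_j\rangle)_{ij}$ and the right-hand side $(\langle h,h_i\rangle)_i$ depend measurably on $s$ by hypothesis, Cramer's rule gives measurability of $u_n(s)$. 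Lax--Milgram (coercivity constant $c$) yields $u_n(s)\to(c+f(s)f^*(s))^{-1}h$ strongly in $H$, so $s\mapsto(c+ff^*)^{-1}$ is scalarly measurable. Transfer to $(c+f^*f)^{-1}$ then proceeds via the intertwining $f(c+f^*f)^{-1}=(c+ff^*)^{-1}f$ combined with the spectral correspondence between $f^*f$ and $ff^*$ coming from the polar decomposition of $f$.

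The main obstacle I expect is precisely this final transfer: the hypothesis is naturally adapted to the ``wrong side'' $ff^*$, whereas the lemma targets $g(f^*f)$. Moreover, the Galerkin convergence $u_n(s)\to(c+ff^*)^{-1}h$ tacitly uses density of $\bigcup_nH_n$ in $\text{dom}(f^*(s))$ with respect to the graph norm, a property not immediately guaranteed by the hypothesis; a clean resolution is to work with the closure $\overline{f^*|_{\text{span}(h_i)}}$ (whose adjoint may be a proper extension of $f$) and to verify afterwards that $g(f^*f)$ can nevertheless be recovered from the resulting measurable calculus by a further density/approximation argument, after which the monotone class reduction completes the proof.
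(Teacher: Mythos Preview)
Your reduction via a monotone class argument to the measurability of resolvents $r_c(f^*f)$ is sound, and indeed the paper's proof also pivots on resolvent measurability (at $\pm i$, then combines them into $(1+(f^*f)^2)^{-1}$ and builds the spectral family from there rather than via monotone class, but this is a cosmetic difference).

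The real gap is exactly where you locate it, and neither of your proposed patches works. Your Galerkin scheme on $H_n=\text{span}(h_1,\dots,h_n)$ approximates the form $a(u,v)=c\langle u,v\rangle+\langle f^*u,f^*v\rangle$, whose associated operator is $c+ff^*$; C\'ea's lemma gives convergence to $(c+ff^*)^{-1}h$ only if $\bigcup_n H_n$ is a core for $f^*$, which the hypothesis does not provide. Replacing $f^*$ by the closure of $f^*|_{\text{span}(h_i)}$ produces a possibly proper restriction $\tilde f^*\subset f^*$, hence $\tilde f\supsetneq f$ in general, and you end up with $g(\tilde f^*\tilde f)$ rather than $g(f^*f)$; there is no ``further density'' that repairs this without the core property. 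The transfer from $ff^*$ to $f^*f$ is equally problematic: the identities you cite, such as $f(c+f^*f)^{-1}=(c+ff^*)^{-1}f$ or $(c+f^*f)^{-1}=c^{-1}\bigl(I-f^*(c+ff^*)^{-1}f\bigr)$, all require applying $f$, to which you have no measurable access, and the polar-decomposition route needs the partial isometry $U$ to be measurable, which is not given either.

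The paper avoids both obstacles by truncating on the \emph{range} side rather than the domain side of $f^*$: it sets $f_n=P_nf$ (equivalently works with the bounded operator $(f^*P_n)^*$, whose adjoint $f^*P_n$ is everywhere defined, bounded, and measurable since $f^*P_nh=\sum_{i\le n}\langle h,h_i\rangle f^*h_i$). For bounded $f_n$ one then has direct measurable access to $f_n^*f_n$ and hence to $(i+f_n^*f_n)^{-1}$ via elementary linear algebra on the basis $(h_i)$. A separate lemma in the paper shows $(i+f_n^*f_n)^{-1}h\to(i+f^*f)^{-1}h$ weakly for every $h$, with no core assumption; the key algebraic step is the resolvent-type identity $(i+A_n^*A_n)^{-1}-(i+A^*A)^{-1}=(i+A^*A)^{-1}(A^*A-A_n^*A_n)(i+A_n^*A_n)^{-1}$ together with $\text{ran}(A-A_n)\perp H_n$. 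This keeps the entire argument on the $f^*f$ side and sidesteps the transfer problem completely. If you rewire your Galerkin idea to this truncation, the rest of your monotone-class framework goes through.
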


To prove this lemma we will need two more lemmas.

\begin{lemma}\label{f^*fselfadjoint}
 Let $H$ be a Hilbert space, and let $T\in \mathcal L_{cl}(H)$. Then $T^*T\in \mathcal 
L_{cl}(H)$.
\end{lemma}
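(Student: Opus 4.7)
The plan is to reduce the claim to the classical von Neumann theorem that $I+T^*T$ is a bijection from $\text{dom}(T^*T)$ onto $H$ admitting a bounded, in fact self-adjoint, inverse; closedness and density of $T^*T$ then drop out with essentially no further work.

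First I would recall that, because $T$ is closed and densely defined, the adjoint $T^*$ is itself closed and densely defined and the graph $G(T)\subset H\oplus H$ is closed. Introducing the unitary $V:H\oplus H\to H\oplus H$ by $V(x,y):=(-y,x)$, a direct unwinding of the definition of $T^*$ gives $V G(T^*) = G(T)^{\perp}$, so
$$
H\oplus H = G(T) \oplus V G(T^*).
$$
Applying this decomposition to an arbitrary $(h,0)$ yields $x\in\text{dom}(T)$ and $y\in\text{dom}(T^*)$ with $h = x - T^*y$ and $0 = Tx+y$; substituting $y=-Tx$ forces $Tx\in\text{dom}(T^*)$, hence $x\in\text{dom}(T^*T)$, and $(I+T^*T)x = h$. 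Thus $I+T^*T$ is surjective, and the elementary identity $\langle (I+T^*T)x,x\rangle = \|x\|^2 + \|Tx\|^2 \geq \|x\|^2$ simultaneously yields injectivity and $\|(I+T^*T)^{-1}\|_{\mathcal L(H)}\leq 1$.

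Closedness of $T^*T$ is then immediate: $S:=(I+T^*T)^{-1}$ is bounded and everywhere defined, hence closed, so its algebraic inverse $I+T^*T$ is closed, and therefore so is $T^*T=(I+T^*T)-I$. For density, note that $I+T^*T$ is symmetric on $\text{dom}(T^*T)$ because $\langle (I+T^*T)x,y\rangle = \langle x,y\rangle + \langle Tx,Ty\rangle$ is symmetric in $x,y\in\text{dom}(T^*T)$; this propagates to symmetry, and hence (being bounded and everywhere defined) self-adjointness of $S$. If $z\perp\text{dom}(T^*T) = \text{ran}(S)$, self-adjointness gives $Sz = 0$, and injectivity of $S$ forces $z=0$.

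The only substantive step is the orthogonal decomposition $H\oplus H = G(T)\oplus V G(T^*)$, which is the heart of von Neumann's adjoint construction and is really a rephrasing of the definition of the adjoint together with closedness of $G(T)$; everything else is routine linear algebra and a one-line positivity estimate, so no serious obstacle is expected.
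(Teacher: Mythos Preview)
Your proof is correct and follows essentially the same route as the paper's: both arguments hinge on von Neumann's theorem that $B:=(I+T^*T)^{-1}$ exists as a bounded self-adjoint (positive) operator on all of $H$, and then read off density of $\text{dom}(T^*T)=\text{ran}(B)$ from injectivity plus self-adjointness of $B$, and closedness of $T^*T$ from closedness of $B$. The only difference is that the paper simply cites this result from Riesz--Sz.-Nagy, whereas you reprove it via the orthogonal decomposition $H\oplus H = G(T)\oplus V G(T^*)$; your version is more self-contained but the core mechanism is identical.
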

\begin{proof}
 According to \cite[Chapter~118]{RSN} there exists a bounded positive operator 
$B\in\mathcal L(H)$ such that
 $B = (1+T^*T)^{-1}$ and $\text{ran }(B) = \text{dom } (T^*T)$. Since $\ker B = 
\{0\}$ by the construction,
 $T^*T$ is densely defined. Furthermore since $B$ is closed, by 
\cite[Proposition~II.6.3]{Yos} $T^*T = B^{-1}-1$ is also closed.
\end{proof}

\begin{lemma}\label{unboundedoperatorlimit}
 Let $H$ be a Hilbert space, $A \subset \mathcal L_{cl}(H)$ be such that 
$(h_n)_{n=1}^{\infty}\subset\text{dom }(A^*)$ for a~certain orthonormal basis 
$(h_n)_{n=1}^{\infty}$ of $H$. For each $n\geq 1$ let $P_n\in\mathcal L(H)$ be 
the orthogonal projection onto $\text{span}(h_1,\ldots,h_n)$, and set $A_n := P_nA$.
Then
\begin{itemize}
\item[(i)] the operators
$((i+A_n^*A_n)^{-1})_{n\geq 1}$, $(i+A^*A)^{-1}$ are bounded;
 \item[(ii)]$(i+A_n^*A_n)^{-1}h \to (i+A^*A)^{-1}h$ weakly for each $h\in H$.
\end{itemize}
\end{lemma}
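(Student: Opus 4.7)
The plan is to prove (i) via the spectral theorem for positive self-adjoint operators and (ii) via Kato's monotone convergence theorem for closed quadratic forms. For (i), Lemma \ref{f^*fselfadjoint} shows that $A^*A$ is positive self-adjoint, so $(i+A^*A)^{-1}$ exists and has norm at most $1$ since the spectrum of $i+A^*A$ lies in $i+[0,\infty)$, hence at distance $\ge 1$ from $0$. For $A_n$, observe that the formal adjoint $A_n^* = A^*P_n$ is bounded (since $\text{ran }(P_n) = \text{span}(h_1,\ldots,h_n) \subset \text{dom }(A^*)$ is finite dimensional), so $A_n$ is closable with bounded closure $\widetilde{A_n}$, and $\widetilde{A_n}^*\widetilde{A_n}$ is a bounded positive self-adjoint operator on $H$, giving the same bound for $(i+A_n^*A_n)^{-1}$.

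For (ii), I would introduce the bounded closed quadratic forms $q_n(y) := \|\widetilde{A_n} y\|^2 = \sum_{m=1}^n|\langle y, A^*h_m\rangle|^2$ on $H$. These are pointwise nondecreasing in $n$ and converge to $q_\infty(y) := \sum_{m=1}^\infty |\langle y, A^*h_m\rangle|^2$ on its natural domain. By Kato's monotone convergence theorem for closed forms (see, e.g., Reed--Simon, Vol.\ I, Thm.\ S.16), the associated self-adjoint operators $\widetilde{A_n}^*\widetilde{A_n}$ converge in the strong resolvent sense to the self-adjoint operator $T_\infty$ associated with $q_\infty$. The crucial identification is $T_\infty = A^*A$: since $q_n(y) \le \|Ay\|^2$ on $\text{dom }(A)$ with equality in the limit, $T_\infty$ is a self-adjoint extension of $A^*A$; but $A^*A$ is itself self-adjoint by Lemma \ref{f^*fselfadjoint}, and self-adjoint operators admit no proper self-adjoint extensions, whence $T_\infty = A^*A$. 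Strong resolvent convergence at any nonreal point of the common resolvent set then gives $(i+A_n^*A_n)^{-1}h \to (i+A^*A)^{-1}h$ in norm, and a fortiori weakly.

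The main obstacle is precisely the identification $T_\infty = A^*A$: a priori the form domain of $q_\infty$ may strictly contain $\text{dom }(A)$, since there could be $y \in H$ with $(\langle y, A^*h_m\rangle)_m \in \ell^2$ yet $y \notin \text{dom }(A^{**}) = \text{dom }(A)$ (ruling this out would require $(h_m)$ to be a core of $A^*$, which is not assumed), so one really must invoke the abstract maximality of self-adjoint operators here. A more elementary Galerkin-type approach --- extracting weak limits $y_{n_k} \rightharpoonup z$ and $A_{n_k} y_{n_k} \rightharpoonup w$ from the bounded sequences $y_n := (i+A_n^*A_n)^{-1}x$ and $A_n y_n$ (whose boundedness follows from taking real parts in the defining equation), then using weak closedness of the graph of $A^*$ to obtain $w \in \text{dom }(A^*)$ with $A^* w = x - iz$, and finally testing against the basis vectors to get $\langle w, h_m\rangle = \langle z, A^* h_m\rangle$ for all $m$ --- hits the same obstruction when one tries to promote this last identity to all $v \in \text{dom }(A^*)$ so as to conclude $w = Az$.
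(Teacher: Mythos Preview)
Your treatment of (i) is fine and matches the paper's one-line invocation of Yosida. The problem is in (ii): your identification $T_\infty = A^*A$ via ``self-adjoint maximality'' does not go through. From $q_\infty|_{\text{dom }(A)} = q_A$ you conclude that $T_\infty$ is an \emph{operator} extension of $A^*A$, but an extension of closed positive forms does \emph{not} in general yield an extension of the associated self-adjoint operators. A standard illustration: take $q_1=\|\,\cdot\,'\|^2$ on $H^1_0(0,1)$ and $q_2=\|\,\cdot\,'\|^2$ on $H^1(0,1)$; then $q_1\subset q_2$ as forms, yet the associated operators are the Dirichlet and Neumann Laplacians, neither of which extends the other. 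In your setting, precisely when $\text{span}(h_m)$ fails to be a core for $A^*$---which, as you yourself note, is not assumed---one can have $\text{dom }(q_\infty)\supsetneq\text{dom }(A)$, and then $T_\infty$ is genuinely different from $A^*A$ (it can even be bounded while $A^*A$ is not, so neither inclusion $T_\infty\supset A^*A$ nor $T_\infty\subset A^*A$ is possible). Thus the obstacle you correctly diagnose in your final paragraph is \emph{not} bypassed by the maximality argument; it is the same obstacle, and Kato's monotone form theorem delivers convergence to the wrong limit operator.

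The paper sidesteps this entirely by a direct weak resolvent computation. Writing $\tilde h=(i+A_n^*A_n)^{-1}h$ and $\tilde g=(A^*A-i)^{-1}g\in\text{dom }(A^*A)$, one moves all unbounded operators onto $\tilde g$ and expands
\[
\langle((i+A_n^*A_n)^{-1}-(i+A^*A)^{-1})h,g\rangle
=\langle A\tilde h,A\tilde g\rangle-\langle A_n\tilde h,A_n\tilde g\rangle,
\]
then uses the exact orthogonality $\text{ran }(A-A_n)\perp H_n$ together with $\text{ran }(A_n)\subset H_n$. One of the two resulting terms vanishes identically, and the other is bounded by $\|\tilde h\|\cdot\|(I-P_n)A^*A\tilde g\|\to 0$. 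No limiting form needs to be identified, and the argument works regardless of whether $(h_m)$ is a core for $A^*$.
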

Using \cite[Problem~III.5.26]{Kato1} we note that $A_n \subset 
(A^*P_n)^*\in\mathcal L_{cl}(H)$ for all $n$.

\begin{proof}
 The first part is an easy consequence of \cite[Theorem~XI.8.1]{Yos}. To prove 
the second part we use the formula
$$
((i+A_n^*A_n)^{-1}-(i+A^*A)^{-1})h = (i + A^*A)^{-1}(A^*A-A_n^*A_n)(i + 
A_n^*A_n)^{-1}h,\;\;\; h\in H,
$$
which follows from the fact that for each $n\geq 1$ there exists 
$\tilde{h}\in H$ such that $h=(i+A_n^*A_n)\tilde h$. Thanks to \cite[p.347]{RSN} 
$\text{ran }(A^*A-i)^{-1} \subset \text{dom } (A^*A)$, and therefore for each $h,g 
\in H$ and $n\geq 0$
\begin{align*}
 \langle((i+A_n^*A_n)^{-1}-(i+A^*A)^{-1})h,g\rangle &= \langle (i + 
A^*A)^{-1}(A^*A-A_n^*A_n)(i + A_n^*A_n)^{-1}h,g\rangle\\
 &= \langle (A^*A-A_n^*A_n)(i + A_n^*A_n)^{-1}h,(A^*A-i)^{-1}g\rangle\\
  &= \langle A(i + A_n^*A_n)^{-1}h,A(A^*A-i)^{-1}g\rangle\\
  &- \langle A_n(i + A_n^*A_n)^{-1}h,A_n(A^*A-i)^{-1}g\rangle\\
  &= \langle (A-A_n)(i + A_n^*A_n)^{-1}h,A(A^*A-i)^{-1}g\rangle\\
  &- \langle A_n(i + A_n^*A_n)^{-1}h,(A_n-A)(A^*A-i)^{-1}g\rangle\\
  &=\langle (i + A_n^*A_n)^{-1}h,(I-P_n)A^*A(A^*A-i)^{-1}g\rangle\\
  &- \langle A_n(i + A_n^*A_n)^{-1}h,(A_n-A)(A^*A-i)^{-1}g\rangle.
\end{align*}
Let $H_n = \text{span }(h_1,\ldots,h_n)$. Note that $(A^*A-i)^{-1}g \in 
\text{dom } (A^*A) \subset \text{dom } (A)$ (see \cite[p.347]{RSN}) and 
$\text{ran }(A-A_n)\perp H_n$, so $(A_n-A)(A^*A-i)^{-1}g \in \text{ran } 
(A_n-A)\perp H_n$. Also $A_n(i + A_n^*A_n)^{-1}h \in \text{ran } (A_n) \subset 
H_n$. Therefore, for each $n\geq 1$
$$
\langle A_n(i + A_n^*A_n)^{-1}h,(A_n-A)(A^*A-i)^{-1}g\rangle = 0,
$$
and for the sequence $(Q_n)_{n\geq 1} := (I-P_n)_{n\geq 1}\subset \mathcal L(H)$ that vanish weakly
\begin{align*}
 \langle((i+A_n^*A_n)^{-1}-(i+A^*A)^{-1})h,g\rangle
  &=\langle (i + A_n^*A_n)^{-1}h,Q_nA^*A(A^*A-i)^{-1}g\rangle\\
  &\leq \|(i + A_n^*A_n)^{-1}h\|\|Q_nA^*A(A^*A-i)^{-1}g\|,
\end{align*}
which vanishes as $n$ tends to infinity, where according to 
\cite[Example~VIII.1.4]{Yos} $\|(i+A_n^*A_n)^{-1}\|\leq 1$ for each $n \geq 1$.

\end{proof}

\begin{proof}[Proof of Lemma \ref{Borelcalculus1}]
 First of all, one can construct $g(f^*f)$ (without proving measurability 
property) by guiding \cite[Chapter~120]{RSN}
by to constructing a spectral family of $f^*f(s)$ for each fixed $s \in S$,
 and further using bounded calculus \cite[Chapter~126]{RSN} for the 
corresponding spectral family.

 To prove scalar measurability we have to plunge into the construction of the 
spectral family.
 Let us firstly prove that $(i+f^*f)^{-1}$ is scalarly measurable. Notice that 
by \cite[Theorem~XI.8.1]{Yos}
 $(i+f^*f(s))^{-1}\in \mathcal L(H)$ for each $s\in S$. We will proceed in two 
steps:

{\it Step 1.} Suppose that $f(s)$ is bounded for all $s \in S$.
 Fix $k\geq 1$. Consider $\text{span} ((i+f^*f) h_i)_{1\leq i\leq k}$. This is a 
$k$-dimensional subspace of $H$ for each $s \in S$ since $i+f^*f$ is invertible. 
Let $\tilde P_{k}$ be defined as an orthogonal projection
onto $\text{span} ((i+f^*f) h_i)_{1\leq i\leq k}$, $(g_i)_{1\leq i\leq k}$ be 
obtained from $((i+f^*f)h_i)_{1\leq i\leq k}$
by the Gram--Schmidt process. These vectors are orthonormal and measurable
because $(\langle (i+f^*f)h_i, (i+f^*f)h_j\rangle)_{1\leq i,j\leq k}$ are 
measurable, so $\tilde P_{k}$
is scalarly measurable. Moreover,
the transformation matrix $C=(c_{ij})_{1\leq i,j \leq k}$ such that
\begin{align*}
 g_i = \sum_{j=1}^k c_{ij}(i+f^*f)h_j, \;\;\; 1\leq i\leq k,
\end{align*}
has measurable elements and invertible since by \cite[Theorem~XI.8.1]{Yos} $\ker 
(i+f^*f) = 0$.
So, one can define the scalarly measurable inverse $(i+f^*f)^{-1}\tilde P_k$:
$$
(i+f^*f)^{-1}\tilde P_k g = \sum_{j=1}^k d_{ij}\langle g,g_j\rangle h_{j},
$$
where $D = \{d_{ij}\}_{1\leq i,j \leq k} = C^{-1}$.

Now fix $s\in S$, $g \in H$. Let $x_k = (i+f^*(s)f(s))^{-1}\tilde P_k(s) g$. 
Since $(i+f^*(s)f(s))^{-1}$ is a bounded operator
and $\lim_{k\to \infty} \tilde P_k g = g$ (because by \cite[Theorem~XI.8.1]{Yos} 
$\text{ran}(i+f^*(s)f(s)) = H$), then
$(i+f^*(s)f(s))^{-1}g = x:=\lim_{k\to\infty}x_k$. So $(i+f^*f)^{-1}g$ is 
measurable as a limit of
measurable functions.

{\it Step 2.} In general case one can consider the function $f_k = P_kf$ for 
each $k \geq 1$,
where $P_k\in \mathcal L(H)$ is an orthogonal projection onto 
$\text{span}(h_1,\ldots,h_k)$. Then by Lemma \ref{unboundedoperatorlimit} and 
thanks to the step 1 applied to $f_k$ one can prove
that $(i+f^*(s)f(s))^{-1}h$ is a weak limit of measurable functions 
$(i+f_k^*(s)f_k(s))^{-1}h$, so, since $H$ is separable, it is measurable.

\vspace{.05cm}

For the same reason $(f^*f-i)^{-1}$ is scalarly measurable, therefore
$(1+(f^*f)^2)^{-1} = (i+f^*f)^{-1}(f^*f-i)^{-1}$ is scalarly measurable.

Now guiding by the construction in \cite[Chapter~120]{RSN} one can consider the 
sequence of orthogonal
Hilbert spaces $\{H_i(s)\}_{i\geq 1}$ depending on $s$ such that orthogonal 
projection $P_{H_i}$ onto $H_i$
is scalarly measurable (thanks to \cite[Proposition~32]{Ond1} and the fact that
$P_{H_i} = \mathbf 1_{(\frac 1{i+1},\frac 1i]}((1+(f^*f)^2)^{-1})$). Then by 
\cite[Chapter~120]{RSN}
$f^*(s)f(s)P_{H_i(s)}$ is bounded for~each~$s$. Moreover, 
$\text{ran}(f^*(s)f(s)P_{H_i(s)})\subset H_i(s)$ $\forall s \in S$,
so $P_{H_i}f^*fP_{H_i} = f^*fP_{H_i}$ and $g(f^*fP_{H_i}):S \to \mathcal 
L(H,H_i)$ is well defined and thanks to \cite[Proposition~32]{Ond1}
scalarly measurable. Finally, since $\|g(f^*fP_{H_i})\|\leq \|g\|_{L^{\infty}}$ 
and $H = \oplus_i H_i$,
then one can define $g(f^*f):= \sum_{i=1}^{\infty}g(f^*fP_{H_i})$ as in 
\cite[Chapter~120]{RSN}, which is scalarly measurable
and by \cite[Chapter~120]{RSN} $\|g(f^*f)\|\leq \|g\|_{L^{\infty}}$ as well.
\end{proof}

\begin{corollary}
 Let $(S,\Sigma,\mu)$ be a measure space, $H$ be a separable Hilbert space, $f:S 
\to \mathcal L_{cl}(H)$
 be such that $f^*h$ is a.s. defined and measurable for each $h\in H$.
 Let $g:\mathbb R \to \mathbb R$ be finite Borel measurable.
 Then $g(f^*f):S \to \mathcal L(H)$ is well-defined and scalarly measurable,
 $\|g(f^*f)(s)\|\leq \|g\|_{L^{\infty}(\mathbb R)}$ for almost all $s \in S$.
\end{corollary}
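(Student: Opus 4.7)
The plan is to reduce the Corollary to Lemma~\ref{Borelcalculus1} by passing to a full-measure subset of $S$ on which a fixed countable orthonormal basis lies in the domain of $f^*$. First, fix an orthonormal basis $(h_i)_{i\geq 1}$ of $H$. By hypothesis, for each $i$ there is a measurable set $S_i \subset S$ with $\mu(S\setminus S_i) = 0$ such that $f^*(s)h_i$ is defined for every $s \in S_i$ and the map $s\mapsto f^*(s)h_i$ is measurable on $S_i$. Set $S_0 := \bigcap_{i\geq 1} S_i$; then $S_0$ is measurable, $\mu(S\setminus S_0) = 0$, and on $S_0$ every $h_i$ belongs simultaneously to $\text{dom }(f^*(s))$ while all of the maps $s\mapsto f^*(s)h_i$ are measurable.

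Next, define a modified function $\widetilde f: S\to \mathcal L_{cl}(H)$ by $\widetilde f(s) := f(s)$ for $s\in S_0$ and $\widetilde f(s) := 0$ for $s\in S\setminus S_0$. The zero operator is bounded and everywhere defined, hence belongs to $\mathcal L_{cl}(H)$, so $\widetilde f$ takes values in $\mathcal L_{cl}(H)$ everywhere. Moreover, the basis $(h_i)_{i\geq 1}$ is contained in $\text{dom }(\widetilde f^*(s))$ for every $s\in S$, and each $s\mapsto \widetilde f^*(s)h_i$ is measurable (it agrees with $s\mapsto f^*(s)h_i$ on $S_0$ and vanishes elsewhere). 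Thus $\widetilde f$ meets the hypotheses of Lemma~\ref{Borelcalculus1}.

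Applying Lemma~\ref{Borelcalculus1} to $\widetilde f$, we conclude that $g(\widetilde f^*\widetilde f): S\to \mathcal L(H)$ is well-defined and scalarly measurable on all of $S$, and satisfies $\|g(\widetilde f^*\widetilde f)(s)\| \leq \|g\|_{L^\infty(\mathbb R)}$ for every $s\in S$. Since $\widetilde f = f$ on the full-measure set $S_0$, one has $g(f^*f)(s) = g(\widetilde f^*\widetilde f)(s)$ for all $s\in S_0$, so $g(f^*f)$ is well-defined $\mu$-almost everywhere, scalarly measurable (after extending it arbitrarily on the null set $S\setminus S_0$), and satisfies the stated norm bound almost everywhere.

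There is no substantial mathematical obstacle here; the argument is purely bookkeeping with null sets, exploiting the fact that countably many exceptional null sets can be absorbed into a single null set precisely because $H$ is separable and we need only verify the hypotheses of the previous lemma on a fixed countable basis.
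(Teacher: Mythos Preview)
Your argument is correct and is precisely the intended reduction: the paper states this Corollary without proof immediately after Lemma~\ref{Borelcalculus1}, treating it as a direct consequence, and your null-set bookkeeping via a fixed countable basis is exactly how one passes from the ``for every $s$'' hypothesis of the Lemma to the ``a.s.'' hypothesis here.
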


The following lemma can be proved in the same way as the second part of \cite[Lemma 
A.1]{VY}.

 \begin{lemma}\label{technical}
Let $(S,\Sigma,\mu)$ be a measure space, $H$ be a separable Hilbert space, and 
let $X$ be a Banach space. Let $X_0\subseteq X$ be a finite dimensional 
subspace. Let $F :S \to \mathcal L_{cl}(X,H)$ be a function such that
$Fx$ is defined a.s.\ and strongly measurable for each $x\in X$.
For each $s\in S$, let $\tilde P(s)\in \calL(H)$ be the orthogonal projection 
onto $F(s) X_0$.
Then $\tilde P$ is strongly measurable. Moreover, there exists a strongly 
measurable function $L :S \to \calL(H,X)$ with values in $X_0$ such that
$FL = \tilde P$.
\end{lemma}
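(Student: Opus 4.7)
The plan is to reduce everything to finitely many measurable manipulations by exploiting that $X_0$ is finite-dimensional. Fix an algebraic basis $x_1,\ldots,x_n$ of $X_0$ and set $v_i(s):=F(s)x_i$, which is defined almost everywhere and strongly measurable by hypothesis; redefining on a null set we may assume it is defined everywhere. Then $F(s)X_0=\mathrm{span}(v_1(s),\ldots,v_n(s))$. Form the Gram matrix $G(s)=(\langle v_i(s),v_j(s)\rangle)_{i,j=1}^{n}$, whose entries are measurable functions of $s$.

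Next I would handle the rank-change issue. For each index set $I\subseteq\{1,\ldots,n\}$ with $|I|=k$, the determinant of the principal submatrix $G_I(s)$ is measurable, so the set
\[
S_I:=\bigl\{s:\det G_I(s)\neq0\text{ and }\det G_J(s)=0\text{ for all }J\text{ with }|J|>k\bigr\}
\]
is measurable. The sets $\{S_I\}_I$ partition $S$ (up to the set where $F|_{X_0}=0$, on which we set $\tilde P=0$, $L=0$). On each $S_I$ the family $(v_i)_{i\in I}$ is a basis of $F(s)X_0$, so applying the Gram--Schmidt procedure to $(v_i(s))_{i\in I}$ produces measurable orthonormal vectors $e_1(s),\ldots,e_k(s)$ spanning $F(s)X_0$, together with measurable coefficients $\alpha_{ji}(s)$ such that $e_j(s)=\sum_{i\in I}\alpha_{ji}(s)v_i(s)$. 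The projection onto $F(s)X_0$ is then
\[
\tilde P(s)h=\sum_{j=1}^{k}\langle h,e_j(s)\rangle e_j(s),
\]
which is a measurable function of $s$ for each fixed $h\in H$; hence $\tilde P$ is scalarly measurable, and since $H$ is separable, strongly measurable.

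For the second assertion I would define $L$ on each $S_I$ by
\[
L(s)h:=\sum_{j=1}^{k}\langle h,e_j(s)\rangle\sum_{i\in I}\alpha_{ji}(s)\,x_i\in X_0.
\]
This is strongly measurable as a map into the finite-dimensional space $X_0$ (measurability on each $S_I$ glues by countability of the partition). Linearity of $F(s)$ on $X_0$ and the identity $F(s)x_i=v_i(s)$ give $F(s)L(s)h=\sum_j\langle h,e_j(s)\rangle e_j(s)=\tilde P(s)h$, which is the required factorisation.

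The only genuine obstacle is the rank of $F(s)|_{X_0}$ jumping with $s$, which would defeat a naive Gram--Schmidt on the full tuple $(v_1,\ldots,v_n)$. Partitioning $S$ according to which subfamily $(v_i)_{i\in I}$ is a basis resolves this, and since $X_0$ is finite-dimensional the partition is finite, so no delicate measurable-selection theorem is required; the argument is essentially the one used in the second part of \cite[Lemma A.1]{VY}.
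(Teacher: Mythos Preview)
The paper does not give a self-contained proof of this lemma; it simply refers to the second part of \cite[Lemma A.1]{VY}, which is exactly the Gram--Schmidt-on-a-finite-basis argument you have written out. Your proof is correct in substance, with one small fix needed: as stated, the sets $S_I$ are not disjoint (if two distinct $k$-element subsets $I,I'$ both index a basis of $F(s)X_0$ at some $s$, that $s$ lies in both $S_I$ and $S_{I'}$), so $L$ would not be well defined. To obtain a genuine finite measurable partition, fix a total order on the subsets of $\{1,\ldots,n\}$ and let $S_I$ consist of those $s$ for which $I$ is the \emph{first} subset (in this order) with $\det G_I(s)\neq 0$ and maximal cardinality among such; the rest of your construction then goes through verbatim.
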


\section{Lemmas on absolute continuity of quadratic variations}\

The main result of this subsection provides a surprising property of a limit in the ucp topology.

\begin{lemma}\label{lemma:appB1}
 Let $(M_n)_{n\geq 1}$, $M$ be real-valued continuous local martingales, and 
$F:\mathbb R_+ \times \Omega \to \mathbb R$ be a continuous progressively 
measurable nondecreasing process such that $\mu_{[M_n]}\ll \mu_F$ a.s.\ for each 
$n\geq 0$. Let $M_n\to M$ in ucp. Then $\mu_{[M]}\ll \mu_F$ a.s.\
\end{lemma}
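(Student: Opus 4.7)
The plan is to reduce the claim, via passing to a subsequence, to a pathwise $L^{2}(\mu_{F})$-convergence argument that exploits the polarization identity for quadratic variation together with the Kunita--Watanabe inequality.

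First I would fix $T\in\N$ and work on $[0,T]$; the conclusion on all of $\R_{+}$ then follows by intersecting the countably many good events on the $[0,T]$. Since $M_{n}\to M$ in ucp, the elementary estimate $[M_{n}-M_{m}]\le 2[M_{n}-M]+2[M-M_{m}]$ together with \cite[Proposition~17.6]{Kal} gives $[M_{n}-M_{m}]_{T}\to 0$ in probability as $n,m\to\infty$. Passing to a subsequence (still denoted $n$), we may assume simultaneously that $[M_{n}-M_{n+1}]_{T}\le 4^{-n}$ for all large $n$ and $[M_{n}]_{T}\to[M]_{T}$, both almost surely.

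Next, by hypothesis and \cite[Lemma~3.10]{VY} I would write $[M_{n}]_{t}=\int_{0}^{t}f_{n}\ud F_{s}$ with $f_{n}\ge 0$ progressively measurable. The pathwise Kunita--Watanabe inequality implies $|\mu_{[M_{n},M_{m}]}|\ll\mu_{F}$ almost surely with density $h_{nm}$ satisfying $|h_{nm}|\le\sqrt{f_{n}f_{m}}$ $\mu_{F}$-a.e. The polarization identity $[M_{n}-M_{m}]=[M_{n}]+[M_{m}]-2[M_{n},M_{m}]$ then yields
\[
\int_{0}^{T}(\sqrt{f_{n}}-\sqrt{f_{m}})^{2}\ud F_{s}\;\le\;\int_{0}^{T}(f_{n}+f_{m}-2h_{nm})\ud F_{s}\;=\;[M_{n}-M_{m}]_{T},
\]
so $(\sqrt{f_{n}})_{n\ge 1}$ is Cauchy in $L^{2}(\mu_{F}|_{[0,T]})$ almost surely; let $g_{T}$ denote its pathwise limit.

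Finally, by Cauchy--Schwarz $f_{n}=(\sqrt{f_{n}})^{2}\to g_{T}^{2}$ in $L^{1}(\mu_{F}|_{[0,T]})$ a.s., so the absolutely continuous processes $[M_{n}]_{t}=\int_{0}^{t}f_{n}\ud F_{s}$ converge uniformly in $t\in[0,T]$ a.s.\ to $\int_{0}^{t}g_{T}^{2}\ud F_{s}$. Combined with $[M_{n}]_{t}\to[M]_{t}$ from the first step, this forces $[M]_{t}=\int_{0}^{t}g_{T}^{2}\ud F_{s}$ on $[0,T]$, hence $\mu_{[M]}|_{[0,T]}\ll\mu_{F}|_{[0,T]}$; taking the union over $T\in\N$ completes the proof. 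The main difficulty I anticipate is justifying the pathwise Kunita--Watanabe bound $|h_{nm}|\le\sqrt{f_{n}f_{m}}$ with the proper progressive measurability of $h_{nm}$; this should follow by applying the scalar Kunita--Watanabe estimate to indicators of intervals and invoking Lebesgue differentiation against $\mu_{F}$.
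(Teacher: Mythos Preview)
Your proposal is correct and takes a genuinely different, more elementary route than the paper. The paper first replaces $F$ by a time-change $\tau_{s}=\inf\{t:F(t)>s\}$ to reduce to the case $\mu_{F}=\lambda$, then invokes the auxiliary Lemma~\ref{lemma:appB2} to realise all the time-changed martingales $N_{n}=M_{n}\circ\tau$ simultaneously as stochastic integrals $F_{n}\cdot W_{H}$ against a single $H$-cylindrical Brownian motion; finally it uses stopping and the It\^o isometry to pass to the limit in $L^{2}(\mathbb R_{+}\times\Omega;H)$, concluding that $[N]=[M]\circ\tau$ is absolutely continuous. Your argument stays entirely on the original time scale and is purely pathwise: comparing the Radon--Nikodym densities $f_{n}=d[M_{n}]/dF$ via Kunita--Watanabe and polarization yields $\|\sqrt{f_{n}}-\sqrt{f_{m}}\|_{L^{2}(\mu_{F}|_{[0,T]})}^{2}\le [M_{n}-M_{m}]_{T}$, which gives Cauchy convergence immediately. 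This avoids the time-change, Lemma~\ref{lemma:appB2}, and the Hilbert-space Brownian representation machinery altogether; the paper's route, by contrast, is more in keeping with the Brownian-representation theme and exhibits the limiting density as an $L^{2}$-limit of genuine stochastic integrands.

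Two minor points to tighten. First, in your subsequence step you should record $[M_{n}]_{t}\to[M]_{t}$ \emph{uniformly on $[0,T]$} a.s., not just at $t=T$, since you use it pointwise in $t$ at the end; this follows from $\sup_{t\le T}\bigl|[M_{n}]_{t}^{1/2}-[M]_{t}^{1/2}\bigr|\le[M_{n}-M]_{T}^{1/2}$. Second, progressive measurability of $f_{n}$ and $h_{nm}$ plays no role in your argument, which is $\omega$-by-$\omega$; the reference to \cite[Lemma~3.10]{VY} can be dropped, and the pathwise bound $|h_{nm}|\le\sqrt{f_{n}f_{m}}$ $\mu_{F}$-a.e.\ follows directly from the interval form of Kunita--Watanabe (as in \cite[Proposition~17.9]{Kal}) together with Lebesgue differentiation against $\mu_{F}$, exactly as you indicate.
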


We will need the following lemma.

\begin{lemma}\label{lemma:appB2}
 Let $(M_n)_{n\geq 1}$ be a sequence of real-valued continuous local martingales 
starting in $0$ such that $[M_n]$ is a.s.\ absolutely continuous for each $n\geq 0$. 
Then there exists a Hilbert space $H$, an $H$-cylindrical Brownian motion $W_H$ 
on an enlarged probability space $(\overline{\Omega}, \overline{\mathcal 
F},\overline{\mathbb P})$ and a sequence of functions $(F_n)_{n\geq 1}$, 
$F_n:\mathbb R_+ \times \overline{\Omega}\to H$, $n\geq 1$ such that
 \begin{equation*}
  M_n = F_n\cdot W_H,\;\;\;n\geq 1.
 \end{equation*}
 \end{lemma}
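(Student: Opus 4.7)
The plan is to aggregate the countable family $(M_n)$ into a single $\ell^2$-valued continuous local martingale and then invoke Theorem \ref{theorem:mainrepresentation}. First I would select deterministic positive scalars $(a_n)_{n\geq 1}$ small enough that $\sum_{n\geq 1} a_n^2 [M_n]_T < \infty$ almost surely for every $T>0$. This can be arranged by a Borel--Cantelli argument: choose a deterministic $c_n$ with $\P([M_n]_n > c_n)\leq 2^{-n}$ and set $a_n = 2^{-n/2}(1+c_n)^{-1/2}$, so that $a_n^2[M_n]_n \leq 2^{-n}$ eventually almost surely, which gives summability on every compact interval.

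With $(e_n)_{n\geq 1}$ the standard orthonormal basis of $\ell^2$ I would then define $\widetilde M(t) := \sum_{n\geq 1} a_n M_n(t) e_n \in \ell^2$. The partial sums $\widetilde M^{(N)} := \sum_{n\leq N} a_n M_n e_n$ are continuous $\ell^2$-valued local martingales whose tails satisfy $[\widetilde M-\widetilde M^{(N)}]_T = \sum_{n>N} a_n^2 [M_n]_T \to 0$ a.s.\ on every compact time interval, which by \cite[Proposition 17.6]{Kal} gives ucp convergence and hence $\widetilde M \in \mathcal M^{\rm loc}(\ell^2)$ with $[\widetilde M]_t = \sum_n a_n^2 [M_n]_t$ absolutely continuous. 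By the Kunita--Watanabe inequality \cite[Proposition 17.9]{Kal} and polarization, $[\langle \widetilde M, h_1\rangle, \langle \widetilde M, h_2\rangle]$ is absolutely continuous for all $h_1,h_2 \in \ell^2$, so the associated cylindrical martingale $\widetilde M^{\rm cyl}: \ell^2 \to \mathcal M^{\rm loc}$, $\widetilde M^{\rm cyl}(h) := \langle \widetilde M, h\rangle$, belongs to $\mathcal M_{\rm a.c.c.}^{\rm cyl}(\ell^2)$.

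Finally, applying Theorem \ref{theorem:mainrepresentation} (noting that $\ell^2$ has separable dual) yields a separable Hilbert space $H$, an enlarged probability space $(\overline{\Omega}, \overline{\mathcal F}, \overline{\P})$ carrying an $H$-cylindrical Brownian motion $W_H$, and a function $G: \mathbb R_+\times\overline\Omega \to L(\ell^2, H)$ with $\widetilde M^{\rm cyl}(h) = \int_0^{\cdot} (Gh)^* \ud W_H$ for every $h \in \ell^2$. Setting $F_n := a_n^{-1} G(e_n) : \mathbb R_+\times\overline\Omega \to H$, one obtains $M_n = a_n^{-1} \widetilde M^{\rm cyl}(e_n) = F_n \cdot W_H$ for each $n$, which is the desired representation. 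The main obstacle is the first step: arranging the scaling $(a_n)$ so that $\widetilde M$ is an honest $\ell^2$-valued continuous local martingale with absolutely continuous quadratic variation; once that is in hand, the conclusion is a direct application of Theorem \ref{theorem:mainrepresentation}.
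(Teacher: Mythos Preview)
Your proposal is correct and follows essentially the same strategy as the paper: rescale the $M_n$ by deterministic constants so that they assemble into a single $\ell^2$-valued continuous local martingale with absolutely continuous quadratic variation, then apply a Hilbert-space Brownian representation and read off the coordinates. The only cosmetic differences are that the paper first restricts to unit time intervals $[k,k+1)$ and glues afterwards (controlling both $\sup|a_nM_n|$ and $[a_nM_n]$ on $[0,1]$), whereas you handle all $T$ at once via the diagonal bound on $[M_n]_n$; and the paper invokes the classical $H$-valued representation (essentially \cite[Theorem~8.2]{DPZ}) directly, while you route through Theorem~\ref{theorem:mainrepresentation}. There is no circularity in doing so, since the proof of Theorem~\ref{theorem:mainrepresentation} does not rely on Lemma~\ref{lemma:appB1} or Lemma~\ref{lemma:appB2}.
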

 \begin{proof}
For each $k\geq 0$ we consider separately 
$(M_n(t)-M_n(k))_{n\geq 1}$, $k\leq t<k+1$. (The resulting cylindrical Brownian 
motions $W_H^k$ can be glued together thanks to the independence of Brownian motion 
increments).
  
  It is enough to consider the case $k=0$. For each $n\geq 1$ one can find a 
nonzero real number $a_n$ such that 
\begin{align*}
   \mathbb P\{\sup_{0\leq t\leq 1} |a_nM_n(t)|&>\frac 1{2^n}\}<\frac 1{2^n},\\
   \mathbb P\{[a_nM_n]_1&>\frac 1{2^n}\}<\frac 1{2^n}.
\end{align*}
Without loss of generality redefine $M_n :=a_nM_n$. Let $H$ be a separable 
Hilbert space with an orthonormal basis $(h_n)_{n\geq 1}$. Then 
$\sum_{n=1}^{\infty}M_nh_n$ converges uniformly a.s.\ Therefore, $\sum_{n=1}^{\infty}M_nh_n$ converges in ucp topology, and $M := \sum_{n=1}^{\infty}M_nh_n:[0,1]\times 
\Omega \to H$ is an $H$-valued continuous local martingale. Moreover, thanks to 
Lemma \ref{lemma:abscontquad} $[M]=\sum_{n=1}^{\infty}[M_n]$ a.s.\, and $[M]$ is 
absolutely continuous as a countable sum of absolutely continuous nondecreasing 
functions. Now using $H$-valued analogue of Brownian representation results one 
can find an $H$-cylindrical Brownian motion $W_H$ on an enlarged probability 
space $(\overline{\Omega}, \overline{\mathcal F},\overline{\mathbb P})$, 
operator-valued function $\Phi:\mathbb R \times \overline{\Omega}\to \mathcal 
L(H)$ such that
$$
\langle M,h\rangle = \Phi h\cdot W_H,\;\;\;h\in H.
$$
In particular
$$
M_n = \langle M,h_n\rangle = \Phi h_n\cdot W_H,\;\;\;h\in H.
$$
 \end{proof}
\begin{proof}[Proof of Lemma \ref{lemma:appB1}]
 Without loss of generality suppose that $F(0) = 0$ and $F(t)\nearrow \infty$ as 
$t\to \infty$ a.s.\ Otherwise redefine 
 $$
 F(t):= F(t)-F(0)+t,\;\;\; t\geq 0.
 $$
 Also by choosing a subsequence set $M_n$ converges to $M$ and $[M_n]$ converges 
to $[M]$ uniformly on compacts a.s.\ as $n$ goes to infinity.
 
 Let $(\tau_s)_{s\geq \infty}$ be the following time change:
 \[
 \tau_s :=
\inf\{t\geq 0:F(t)>s\}, \;\;\; s\geq 0. 
\]
 Then for each $n\geq 1$ by \cite[Proposition 17.6]{Kal} and the fact that 
$\mu_{[M_n]}\ll \mu_F$ a.s., $M_n$ is $\tau$-continuous (see \cite[Chapter 
7]{Kal}). Since $M_n$ are $\tau$-continuous and $M_n$ converge to $M$ uniformly 
on compacts a.s., then $M$ is $\tau$-continuous, and one can then define local 
martingales
 \begin{align*}
   N_n:&= M_n\circ \tau,\;\;\; n\geq 1,\\
   N :&=M\circ\tau,
 \end{align*}
 which are defined on a probability space $(\Omega, \mathcal F, \mathbb P)$ with 
an induced filtration $\mathbb G = (\mathcal G_s)_{s\geq 0} = (\mathcal 
F_{\tau_s})_{s\geq 0}$ (see \cite[Chapter 7]{Kal}). Also by \cite[Theorem 
17.24]{Kal} $[N_n] = [M_n]\circ \tau$ a.s., hence since $\mu_{[M_n]}\ll \mu_{F}$
 $$
 \mu_{[N_n]} = \mu_{[M_n]\circ \tau}\ll \mu_{F\circ \tau} = \lambda,
 $$
 so by Lemma \ref{lemma:appB2} there exists a separable Hilbert space $H$, an 
$H$-cylindrical Brownian motion $W_H$ on an enlarged probability space 
$(\overline{\Omega}, \overline{\mathcal F},\overline{\mathbb P})$ with an 
enlarged filtration $\overline{\mathbb G}$ and a sequence of functions 
$(F_n)_{n\geq 1}$, $F_n:\mathbb R_+ \times \overline{\Omega}\to H$, $n\geq 1$ 
such that
 \begin{equation*}
  N_n = F_n\cdot W_H,\;\;\;n\geq 1.
 \end{equation*}
 But we know that $N_n\to N$ uniformly on compacts a.s.\ since $\tau_s \to 
\infty$ a.s.\ as $s\to \infty$, so there exists a progressively measurable 
function $R:\mathbb R_+\times\Omega\to\mathbb R_+$, $t\mapsto 
\sup_{n\geq 1}|N_n|(t)+t$, which is nondecreasing continuous a.s. For each natural $k$ define a stopping time $\rho_k:=
\inf\{t\geq 0:R(t)>k\}$. Then $N_n^{\rho_k} \to N^{\rho_k}$ uniformly on 
compacts a.s.\ But $N_n^{\rho_k}$, $N^{\rho_k}$ are bounded by $k$, hence 
they are $L^2$-martingales, and the convergence holds in $L^2$. Hence using the 
cylindrical case of the It\^{o} isometry \cite[Remark 30]{Ond1} one can see that 
$(F_n\mathbf 1_{[0,\rho_k]})_{n\geq 1}$ converges to a function $F^k$ in 
$L^2(\mathbb R_+\times\Omega;H)$. Therefore $N^{\rho_k} = F^k\cdot W_H$, 
so $[N^{\rho_k}] = [N]^{\rho_k}=\int_0^{\cdot}\|F^k(s)\|^2\ud s$ is absolutely 
continuous. Taking $k$ to infinity and using the fact that $\rho_k \to \infty$ 
as $k\to \infty$ one can see that $[N]$ is absolutely continuous. Then
$$
\mu_{[M]}=\mu_{[N]\circ F}\ll \mu_{\lambda\circ F} =\mu_F.
$$
\end{proof}

{\bf Acknowledgments.} The author would like to thank Jan van Neerven and Mark Veraar for helpful 
comments.

\bibliographystyle{plain}

\begin{thebibliography}{10}

\bibitem{ArYar}
L.M. Arutyunyan and I.S. Yaroslavtsev.
\newblock Measurable polynomials on infinite-dimensional spaces.
\newblock {\em Dokl. Akad. Nauk}, 449(6):627--631, 2013.

\bibitem{Bog}
V.I. Bogachev.
\newblock {\em Measure theory. {V}ol. {I}, {II}}.
\newblock Springer-Verlag, Berlin, 2007.

\bibitem{Burk01}
D.L. Burkholder.
\newblock Martingales and singular integrals in {B}anach spaces.
\newblock In {\em Handbook of the geometry of {B}anach spaces, {V}ol. {I}},
  pages 233--269. North-Holland, Amsterdam, 2001.

\bibitem{DPZ}
G.~Da~Prato and J.~Zabczyk.
\newblock {\em Stochastic equations in infinite dimensions}, volume~44 of {\em
  Encyclopedia of Mathematics and its Applications}.
\newblock Cambridge University Press, Cambridge, 1992.

\bibitem{Dett}
E.~Dettweiler.
\newblock Representation of {B}anach space valued martingales as stochastic
  integrals.
\newblock In {\em Probability in {B}anach spaces, 7 ({O}berwolfach, 1988)},
  volume~21 of {\em Progr. Probab.}, pages 43--62. Birkh\"auser Boston, Boston,
  MA, 1990.

\bibitem{DGFR}
C.~Di~Girolami, G.~Fabbri, and F.~Russo.
\newblock The covariation for {B}anach space valued processes and applications.
\newblock {\em Metrika}, 77(1):51--104, 2014.

\bibitem{Doob53}
J.L. Doob.
\newblock {\em Stochastic processes}.
\newblock John Wiley \& Sons, Inc., New York; Chapman \& Hall, Limited, London,
  1953.

\bibitem{Ed}
R.~E. Edwards.
\newblock {\em Functional analysis. {T}heory and applications}.
\newblock Holt, Rinehart and Winston, New York-Toronto-London, 1965.

\bibitem{Hamel}
G.~Hamel.
\newblock Eine {B}asis aller {Z}ahlen und die unstetigen {L}\"osungen der
  {F}unktionalgleichung: {$f(x+y)=f(x)+f(y)$}.
\newblock {\em Math. Ann.}, 60(3):459--462, 1905.

\bibitem{HNVW2}
T.~Hyt\"{o}nen, J.M.A.M. van Neerven, M.C. Veraar, and L.~Weis.
\newblock {\em Analysis in {B}anach {S}paces. {P}art~{II}:~{P}robabilistic
  {M}ethods and {O}perator {T}heory}.
\newblock Submitted book.

\bibitem{HNVW1}
T.~Hyt\"{o}nen, J.M.A.M. van Neerven, M.C. Veraar, and L.~Weis.
\newblock {\em Analysis in {B}anach {S}paces. {P}art~{I}:~{M}artingales and
  {L}ittlewood--{P}aley {T}heory}, volume~63.
\newblock Springer, 2016.

\bibitem{JKFR}
A.~Jakubowski, S.~Kwapie{\'n}, P.R. de~Fitte, and J.~Rosi{\'n}ski.
\newblock Radonification of cylindrical semimartingales by a single
  {H}ilbert-{S}chmidt operator.
\newblock {\em Infin. Dimens. Anal. Quantum Probab. Relat. Top.},
  5(3):429--440, 2002.

\bibitem{Kal}
O.~Kallenberg.
\newblock {\em Foundations of modern probability}.
\newblock Probability and its Applications (New York). Springer-Verlag, New
  York, second edition, 2002.

\bibitem{KS}
I.~Karatzas and S.E. Shreve.
\newblock {\em Brownian motion and stochastic calculus}, volume 113 of {\em
  Graduate Texts in Mathematics}.
\newblock Springer-Verlag, New York, second edition, 1991.

\bibitem{Kato1}
T.~Kato.
\newblock {\em Perturbation theory for linear operators}.
\newblock Die Grundlehren der mathematischen Wissenschaften, Band 132.
  Springer-Verlag New York, Inc., New York, 1966.

\bibitem{KF}
A.N. Kolmogorov and S.V. Fom{\={\i}}n.
\newblock {\em Introductory real analysis}.
\newblock Dover Publications, Inc., New York, 1975.
\newblock Translated from the second Russian edition and edited by Richard A.
  Silverman, Corrected reprinting.

\bibitem{Kuo}
H.H. Kuo.
\newblock {\em Gaussian measures in {B}anach spaces}.
\newblock Lecture Notes in Mathematics, Vol. 463. Springer-Verlag, Berlin-New
  York, 1975.

\bibitem{LO}
D.~L{\'e}pingle and J.-Y. Ouvrard.
\newblock Martingales browniennes hilbertiennes.
\newblock {\em C. R. Acad. Sci. Paris S\'er. A-B}, 276:A1225--A1228, 1973.

\bibitem{MP}
M.~M{\'e}tivier and J.~Pellaumail.
\newblock {\em Stochastic integration}.
\newblock Academic Press [Harcourt Brace Jovanovich, Publishers], New
  York-London-Toronto, Ont., 1980.
\newblock Probability and Mathematical Statistics.

\bibitem{NVW}
J.M.A.M.~van Neerven, M.C. Veraar, and L.~Weis.
\newblock Stochastic integration in {UMD} {B}anach spaces.
\newblock {\em Ann. Probab.}, 35(4):1438--1478, 2007.

\bibitem{NW1}
J.M.A.M.~van Neerven and L.~Weis.
\newblock Stochastic integration of functions with values in a {B}anach space.
\newblock {\em Studia Math.}, 166(2):131--170, 2005.

\bibitem{Nua}
D.~Nualart.
\newblock {\em Malliavin calculus and its applications}, volume 110 of {\em
  CBMS Regional Conference Series in Mathematics}.
\newblock Published for the Conference Board of the Mathematical Sciences,
  Washington, DC; by the American Mathematical Society, Providence, RI, 2009.

\bibitem{Ond1}
M.~Ondrej{\'a}t.
\newblock Brownian representations of cylindrical local martingales, martingale
  problem and strong {M}arkov property of weak solutions of {SPDE}s in {B}anach
  spaces.
\newblock {\em Czechoslovak Math. J.}, 55(130)(4):1003--1039, 2005.

\bibitem{Ond2}
M.~Ondrej{\'a}t.
\newblock Integral representations of cylindrical local martingales in every
  separable {B}anach space.
\newblock {\em Infin. Dimens. Anal. Quantum Probab. Relat. Top.},
  10(3):365--379, 2007.

\bibitem{Ouv1}
J.-Y. Ouvrard.
\newblock Repr\'esentation de martingales vectorielles de carr\'e int\'egrable
  \`a valeurs dans des espaces de {H}ilbert r\'eels s\'eparables.
\newblock {\em Z. Wahrscheinlichkeitstheorie und Verw. Gebiete},
  33(3):195--208, 1975/76.

\bibitem{Prot}
P.E. Protter.
\newblock {\em Stochastic integration and differential equations}, volume~21 of
  {\em Stochastic Modelling and Applied Probability}.
\newblock Springer-Verlag, Berlin, 2005.
\newblock Second edition. Version 2.1, Corrected third printing.

\bibitem{Rie}
M.~Riedle.
\newblock Cylindrical {W}iener processes.
\newblock In {\em S\'eminaire de {P}robabilit\'es {XLIII}}, volume 2006 of {\em
  Lecture Notes in Math.}, pages 191--214. Springer, Berlin, 2011.

\bibitem{RSN}
F.~Riesz and B.~Sz.-Nagy.
\newblock {\em Functional analysis}.
\newblock Frederick Ungar Publishing Co., New York, 1955.
\newblock Translated by Leo F. Boron.

\bibitem{Rubio86}
J.L. Rubio~de Francia.
\newblock Martingale and integral transforms of {B}anach space valued
  functions.
\newblock In {\em Probability and {B}anach spaces ({Z}aragoza, 1985)}, volume
  1221 of {\em Lecture Notes in Math.}, pages 195--222. Springer, Berlin, 1986.

\bibitem{Sch96}
L.~{Schwartz}.
\newblock {Le th\'eor\`eme des trois op\'erateurs.}
\newblock {\em {Ann. Math. Blaise Pascal}}, 3(1):143--164, 1996.

\bibitem{SV}
D.W. Stroock and S.R.S. Varadhan.
\newblock {\em Multidimensional diffusion processes}.
\newblock Classics in Mathematics. Springer-Verlag, Berlin, 2006.
\newblock Reprint of the 1997 edition.

\bibitem{TL}
A.E. Taylor and D.C. Lay.
\newblock {\em Introduction to functional analysis}.
\newblock John Wiley \& Sons, New York-Chichester-Brisbane, second edition,
  1980.

\bibitem{VY}
M.C. Veraar and I.S. Yaroslavtsev.
\newblock Cylindrical continuous martingales and stochastic integration in
  infinite dimensions.
\newblock {\em Electron. J. Probab.}, 21:Paper No. 59, 53, 2016.

\bibitem{Yos}
K.~Yosida.
\newblock {\em Functional analysis}.
\newblock Classics in Mathematics. Springer-Verlag, Berlin, 1995.
\newblock Reprint of the sixth (1980) edition.

\end{thebibliography}

\end{document}